\newcommand{\ud}{\mathrm{d}}
\newcommand{\e}{\mathrm{e}}
\newcommand{\CR}{\mathds{R}}
\newcommand{\CZ}{\mathds{Z}}
\newcommand{\CC}{\mathds{C}}
\newcommand{\depp}[2]{\frac{\partial\emph{$#1$}}{\partial{
\emph{$#2$}}}}
\newcommand{\dev}[2]{\displaystyle\frac{\ud\emph{$#1$}}{\ud\emph{$#2$}}}
\newcommand{\devv}[2]{\frac{\ud\emph{$#1$}}{\ud\emph{$#2$}}}
\newcommand{\fraction}{\displaystyle\frac}
\newcommand{\integral}[4]{\displaystyle\int_{\emph{$#1$}}^{\emph{$#2$}}
{\emph{$#3$}} \ \ud\emph{$#4$}}
\newcommand{\Somatorio}[2]{\displaystyle\sum\limits_{\emph{$#1$}}^
{\emph{$#2$}}}
\newcommand{\somatorio}[2]{\sum\limits_{\emph{$#1$}}^{\emph{$#2$}}}
\newcommand{\limite}[2]{\displaystyle\lim_{{\emph{$#1$}}\to{\emph{$#2$}}}}
\theoremstyle{plain}
\newtheorem{teo}{Theorem}[section]
\newtheorem{lem}{Lemma}[section]
\newtheorem{prop}{Proposition}[section]
\theoremstyle{definition}
\newtheorem{exam}{Example}[section]
{\newtheorem{rema}{Remark}[section]
\newtheorem{defi}{Definition}[section]}
\title[A Poincar\'{e} lemma in Geometric Quantisation]{A Poincar\'{e} lemma in Geometric Quantisation}
\author[Eva Miranda and Romero Solha]{}
\subjclass{Primary: 53D50; Secondary: 53D20.}
 \keywords{Geometric quantisation, integrable system, singularities, moment maps,  foliated cohomology}
 \email{eva.miranda@upc.edu }
 \email{romero.barbieri@upc.edu}
\thanks{Both authors are partially supported by the MINECO project GEOMETRIA ALGEBRAICA, SIMPLECTICA, ARITMETICA Y
APLICACIONES with reference: MTM2012-38122-C03-01. This research has also been partially supported by ESF network CAST, \emph{Contact and Symplectic Topology}. Romero Solha has been partially supported by Start-Up Erasmus Mundus External Cooperation Window
2009-2010 project.}
\begin{document}
\maketitle

\centerline{\scshape Eva Miranda}
\medskip
{\footnotesize
 \centerline{Departament de Matem\`{a}tica Aplicada I}
   \centerline{Universitat Polit\`{e}cnica de Catalunya, EPSEB}
   \centerline{Avinguda del Doctor Mara\~{n}\'{o}n, 44-50, 08028, Barcelona, Spain}} 

\medskip

\centerline{\scshape Romero Solha}
\medskip
{\footnotesize
 \centerline{Departament de Matem\`{a}tica Aplicada I}
   \centerline{Universitat Polit\`{e}cnica de Catalunya, ETSEIB}
   \centerline{Avinguda Diagonal 647, 08028, Barcelona, Spain}
}

%


\begin{abstract}
This article presents a Poincar\'{e} lemma for the Kostant complex, used to compute geometric quantisation, when the polarisation is given by a Lagrangian foliation defined by an integrable system with nondegenerate singularities.
\end{abstract}
\maketitle

\section{Introduction}

The aim of this article is to prove a Poincar\'{e} lemma for a complex that is used to compute geometric quantisation associated to a given real polarisation. It can be considered as the sequel of \cite{MiSolha}, in which the existence of a Poincar\'{e} lemma was investigated for a complex that computes the \emph{foliated cohomology} of a foliated manifold.

In \cite{MiSolha} we concluded that, if a foliation admits (special types of) singularities, then the foliated Poincar\'{e} lemma, which is well-known to hold for regular foliations, does no longer exist in general.  The motivating example was the foliation determined by the Hamiltonian vector fields of an integrable system with nondegenerate singularities.

The reason for us to consider this set of examples comes from the fact that integrable systems provide natural examples of real polarisations. Real polarisations show up in Geometric Quantisation, where additional data needs to be considered in order to choose a quantum representation space: the ingredients being  a complex line bundle, a Hermitian connection and a polarisation (see \cite{woodhouse} for more details on the theory of Geometric Quantisation).

The case of Geometric Quantisation with K\"{a}hler polarisation is a classical one: see \cite{guilleminsternbergnew,JHR,woodhouse} and references therein.

It was probably Kostant \cite{Sni,JHR} who first suggested defining this representation space as the cohomology with coefficients in the sheaf of flat sections of the prequantum bundle (since in general no global sections exist in the case of real polarisations).

Explicit computations of geometric quantisation for real polarisations have been done for Lagrangian fibrations \cite{Sni} and for Gelfand-Cetlin systems \cite{GuSt}. In both results, finding an explicit set of action-angle coordinates plays an important role in the calculation (quantisation is given by the set where action coordinates take integral values, these, in turn, are the Bohr-Sommerfeld leaves of the system), and the results of \cite{GuSt} connect with the field of representation theory, since  a computation of the dimension of the spaces on which a representation of a prescribed maximal weight is given is included in \cite{GuSt} ---it is important to point out here that singularities of Gelfand-Cetlin systems are excluded in this computation.

The recent article \cite{MiPe} considers the general case of polarisations which are not necessarily given by a fibration and develops tools for dealing with sheaf cohomology computations, discussing also some \emph{pathological} cases.

For real polarisations with singularities, these computations have been extended to the toric case \cite{Ha,Solha}, and hyperbolic singularities are considered in \cite{HaMi}.

One approach to compute this cohomology is \`{a} la de Rham, by finding a resolution of the sheaf of flat sections, as in \cite{Sni,JHR}. A different approach using \v{C}ech cohomology can be found in \cite{Ha} and \cite{HaMi}.

Following Kostant \cite{Sni,JHR}, a resolution for the sheaf of sections can be obtained by \emph{twisting} the sheaf computing foliated cohomology with the sheaf of flat sections. This observation allows to explicitly attack the problem of computing geometric quantisation with real regular polarisations. This complex is a fine resolution for the sheaf of flat sections because of the existence of a Poincar\'{e} lemma for the foliated cohomology complex.

When the real polarisation has singularities this recipe does not hold in general, because it is no longer true that the foliated cohomology complex admits a Poincar\'{e} lemma; even if a singular Poincar\'{e} lemma had been proved for the deformation complex of integrable systems with nondegenerate singularities \cite{MiNg}.

The purpose of this article is to prove that, notwithstanding the fact that local closed forms are not necessarily locally exact for foliated cohomology of singular foliations, we can still prove a Poincar\'{e} lemma for the twisted complex (called Kostant complex in the sequel) when the foliation is given by an integrable system with nondegenerate singularities. In particular, this allows to compute geometric quantisation by calculating the cohomology of the Kostant complex. In order to do so, we exploit the geometrical properties of this kind of singularities.

On the one hand, if the singularities are of elliptic or focus-focus type, the existence of circle actions with good properties allows to prove a Poincar\'{e} lemma \cite{Solha}. On the other hand, if the singularities are of hyperbolic type we can still prove a Poincar\'{e} lemma using a sharp analysis of Taylor flat functions. In this article we give a complete proof of a Poincar\'{e} lemma for Kostant complex when in the presence of hyperbolic singularities.

Throughout this article and otherwise stated, all the objects considered will be $C^\infty$; manifolds are real, Hausdorff, paracompact and connected; $C^\infty(V)$ denotes the set of smooth complex-valued functions over $V$; and the units are such that $\hbar=1$.


\subsection{Organisation of the article} Section 2 recalls prequantisation and some basic facts of integrable systems with singularities. In section 3 we define the geometric quantisation with real polarisations due to Kostant. Section 4 describes the Lie pseudoalgebras approach to foliated cohomology necessary to deal with the Kostant complex in the singular case. In section 5 we provide a brief summary of the results contained in \cite{MiSolha} and \cite{Solha} concerning the foliated cohomology and the Kostant complex when circle actions are taken into account. Section 6 contains a proof of a Poincar\'{e} lemma for geometric quantisation with hyperbolic singularities. Finally, in Section 7 we consider the case of higher dimensions.


\section{Prequantisation}\label{prequantum}

This section deals with some concepts needed to define a quantisation space. The first attempt was to visualise the quantum states as sections of a complex line bundle over the symplectic manifold, the so-called prequantum line bundle. The other notion described here, \emph{polarisations}, is a way to define a global distinction between momentum and position.


\subsection{Prequantum line bundle}

A symplectic manifold $(M,\omega)$ such that the de Rham class $[\omega]$ is integral is called prequantisable. A prequantum line bundle of $(M,\omega)$ is a Hermitian line bundle over $M$ with a connection, compatible with the Hermitian structure, $(L,\nabla^\omega)$ that satisfies $curv(\nabla^\omega)=-i\omega$ (the curvature of $\nabla^\omega$ is proportional to the symplectic form).\par

Any exact symplectic manifold satisfies $[\omega]=0$, in particular cotangent bundles with the canonical symplectic structure. In that case, the trivial line bundle is an example of a prequantum line bundle with a nontrivial connection.

The following theorem provides a relation between the above definitions (a proof can be found in \cite{Ko}):

\begin{teo} A symplectic manifold $(M,\omega)$ admits a prequantum line bundle $(L,\nabla^{\omega})$ if and only if it is prequantisable.
\end{teo}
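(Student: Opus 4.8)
The statement is an equivalence, so the plan is to prove the two implications separately. The necessity (existence of a prequantum line bundle forces integrality) is the soft direction: given any Hermitian line bundle $(L,\nabla)$ with a compatible connection, Chern--Weil theory tells us that the de Rham class of $\frac{i}{2\pi}\,\mathrm{curv}(\nabla)$ is the image of the first Chern class $c_1(L)\in H^2(M;\CZ)$ under the natural map $H^2(M;\CZ)\to H^2(M;\CR)$, hence is an integral class. Substituting the hypothesis $\mathrm{curv}(\nabla^\omega)=-i\omega$ gives $[\frac{\omega}{2\pi}]=c_1(L)$ up to the chosen normalisation, so $[\omega]$ is integral. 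This direction requires nothing beyond recalling that curvatures of Hermitian line bundles represent integral classes.

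The content is in the converse, which I would prove by an explicit \v{C}ech--de Rham construction (the Weil--Kostant argument). First I would fix a good cover $\{U_\alpha\}$ of $M$, so that all finite intersections are either empty or contractible. Because $\omega$ is closed, the ordinary Poincar\'{e} lemma yields primitives $\theta_\alpha$ with $\omega=\ud\theta_\alpha$ on each $U_\alpha$. On a double overlap $U_{\alpha\beta}=U_\alpha\cap U_\beta$ the difference $\theta_\alpha-\theta_\beta$ is closed, hence exact, so $\theta_\alpha-\theta_\beta=\ud f_{\alpha\beta}$; and on triple overlaps the combination $f_{\alpha\beta}+f_{\beta\gamma}+f_{\gamma\alpha}$ is locally constant, defining real numbers $c_{\alpha\beta\gamma}$ that form a \v{C}ech $2$-cocycle representing $[\omega]$ in real cohomology.

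The integrality hypothesis is exactly what lets me promote this real cocycle to transition data for a genuine line bundle. Using that $[\omega]$ lies in the image of $H^2(M;\CZ)$, I would adjust the $f_{\alpha\beta}$ (by adding locally constant functions) so that the constants satisfy $c_{\alpha\beta\gamma}\in 2\pi\CZ$. Then the functions $g_{\alpha\beta}=\e^{i f_{\alpha\beta}}\colon U_{\alpha\beta}\to U(1)$ obey the cocycle condition $g_{\alpha\beta}\,g_{\beta\gamma}\,g_{\gamma\alpha}=\e^{i c_{\alpha\beta\gamma}}=1$, so they glue the trivial bundles $U_\alpha\times\CC$ into a Hermitian line bundle $L$. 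I would then define the connection locally by $\nabla^\omega=\ud-i\theta_\alpha$; compatibility on overlaps follows from $\theta_\alpha-\theta_\beta=\ud f_{\alpha\beta}=-i\,g_{\alpha\beta}^{-1}\ud g_{\alpha\beta}$, and a local computation gives $\mathrm{curv}(\nabla^\omega)=-i\,\ud\theta_\alpha=-i\omega$, with Hermitian compatibility automatic since the $\theta_\alpha$ are real.

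The main obstacle is the integrality bookkeeping in the previous step: making precise why integrality of $[\omega]$ permits the choice $c_{\alpha\beta\gamma}\in 2\pi\CZ$, and keeping the factors of $2\pi$ and the signs consistent throughout. This is cleanest when phrased through the exponential sheaf sequence $0\to\CZ\to\CR\to U(1)\to 0$, whose connecting homomorphism identifies the \v{C}ech class of the $g_{\alpha\beta}$ with $c_1(L)$ and shows that an integral de Rham class is realised by some $U(1)$-valued cocycle; the class $[\omega]$ being integral is then precisely the condition for the constructed bundle to exist. Everything else---the Poincar\'{e} lemma on contractibles, the cocycle check, and the curvature computation---is routine once the good cover and the integral lift are in place.
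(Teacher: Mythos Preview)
Your argument is the standard Weil--Kostant construction and is correct. Note, however, that the paper does not actually prove this theorem: it merely states it and refers the reader to Kostant's original article \cite{Ko} for a proof, so there is no in-paper argument to compare against. What you have sketched is essentially the classical \v{C}ech--de Rham proof found there (and in standard references such as \cite{woodhouse}), including the correct identification of the integrality hypothesis as precisely the obstruction to lifting the real \v{C}ech $2$-cocycle $\{c_{\alpha\beta\gamma}\}$ to $2\pi\CZ$ so that the $U(1)$-valued transition functions satisfy the cocycle condition.
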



\subsection{Polarisations given by nondegenerate integrable systems}

An integrable system on a symplectic manifold $(M,\omega)$ of dimension $2n$ is a set of $n$ functions, $f_1,\dots,f_n\in C^\infty(M)$, satisfying $\ud f_1\wedge\cdots\wedge\ud f_n\neq 0$ over an open dense subset of $M$ and $\{f_i,f_j\}=0$ for all $i,j$. The mapping $F=(f_1,\dots,f_n):M\to\CR^n$ is called a moment map.

The Poisson bracket is defined by $\{f,g\}=X_f(g)$, where $X_f$ is the unique vector field defined by the equation $\imath_{X_f}\omega=-\ud f$, called the Hamiltonian vector field of $f$.

The distribution generated by the Hamiltonian vector fields of the moment map, $\langle X_{f_1},\dots,X_{f_n}\rangle$, is involutive because $[X_f, X_g]=X_{\{f,g\}}$. Since $0=\{f_i,f_j\}=\omega(X_{f_i}, X_{f_j})$, the leaves of the associated (possibly singular) foliation are isotropic submanifolds and they are Lagrangian at points where the functions are functionally independent (generically Lagrangian foliation).\par

A real polarisation $\mathcal{P}$ is an integrable distribution of $TM$, in Sussmann's sense \cite{SSNN}, whose leaves are generically Lagrangian. The complexification of $\mathcal{P}$ is denoted by $P$ and will be called polarisation. From now on $(L,\nabla^{\omega})$ will be a prequantum line bundle and $P$ the complexification of a real polarisation of $(M,\omega)$.\par

There is a notion of nondegenerate singular points which was initially introduced by Eliasson \cite{eli1,eli2}.

We denote by $(x_1,y_1,\dots,x_n,y_n)$ a set of coordinates centered
at the origin of $\CR^{2n}$, and by $\omega$ the Darboux symplectic form
$\omega=\sum_{i=1}^n \ud x_i\wedge \ud y_i$ in this neighborhood.

When $p$ is a singular point for the moment map ($\ud f_i=0$ at $p$ for all $i$), since the functions $f_i$ are in involution
with respect to the Poisson bracket, the quadratic parts of the
functions $f_i$ commute, defining in this way an Abelian subalgebra
of $Q(2n,\CR)$  (the set of quadratic forms on
$2n$-variables). We
say that  these singularities  are of nondegenerate type if this subalgebra is a Cartan subalgebra.

Cartan subalgebras of $Q(2n,\CR)$ were classified by Williamson
in \cite{Wll}.

\begin{teo}[Williamson]{\label{Willi}}
  For any Cartan subalgebra $\mathcal C$ of $Q(2n,\CR)$ there is
  a symplectic system of coordinates $(x_1,y_1,\dots,x_n,y_n)$ in
  $\CR^{2n}$ and a basis $h_1,\dots,h_n$ of $\mathcal C$ such
  that each $h_i$ is one of the following:
  \begin{equation}
    \begin{array}{lcr}
      h_i = x_i^2 + y_i^2 & \textup{for }  1 \leq i \leq k_e \ , &
\textup{(elliptic)}  \\
      h_i = x_iy_i  &  \textup{for }  k_e+1 \leq i \leq k_e+k_h \ , &
      \textup{(hyperbolic)}\\
      \begin{cases}
        h_i = x_i y_i + x_{i+1} y_{i+1} , \\
        h_{i+1} = x_i y_{i+1}-x_{i+1} y_i
      \end{cases} &
      \begin{array}{c}
        \textup{for }  i = k_e+k_h+ 2j-1,\\ 1\leq j \leq k_f  \ .
      \end{array}
      &
      \textup{(focus-focus pair)}
 \end{array}
  \end{equation}
\end{teo}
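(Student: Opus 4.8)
The plan is to pass from quadratic forms to their linear Hamiltonian vector fields and to classify commuting semisimple symplectic endomorphisms, since that is where the real structure is most transparent.

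First I would establish the Lie algebra isomorphism between $(Q(2n,\CR),\{\cdot,\cdot\})$ and the symplectic Lie algebra $\mathfrak{sp}(2n,\CR)$. To a quadratic form $h$ one assigns its Hamiltonian vector field $X_h$, which is linear; writing $h(v)=\tfrac12\, v^{\mathsf T}Sv$ with $S$ symmetric and letting $J$ be the matrix of $\omega$, one has $X_h=J^{-1}S\in\mathfrak{sp}(2n,\CR)$ (the relation $A^{\mathsf T}J+JA=0$ is immediate from $J^{\mathsf T}=-J$). The identity $[X_f,X_g]=X_{\{f,g\}}$ quoted above shows this assignment is a Lie algebra isomorphism, so the Cartan subalgebra $\mathcal{C}\subset Q(2n,\CR)$ is carried to a Cartan subalgebra $\mathfrak{h}\subset\mathfrak{sp}(2n,\CR)$: an abelian subalgebra of dimension $n$ whose elements are semisimple. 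Since conjugation of these endomorphisms by an element of $Sp(2n,\CR)$ is precisely a linear symplectic change of coordinates on $\CR^{2n}$, the theorem reduces to a simultaneous symplectic normalisation of the commuting family $\{X_h:h\in\mathcal{C}\}$, after which one reads off the quadratic forms.

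Next I would exploit the spectral constraint coming from $A^{\mathsf T}J+JA=0$: the eigenvalues of any such $A$ are invariant under $\lambda\mapsto-\lambda$ and under complex conjugation, so they occur in sets $\{\lambda,-\lambda,\bar\lambda,-\bar\lambda\}$. For a single semisimple generator this produces exactly three irreducible symplectic blocks: a real pair $\pm\lambda$ gives a two-dimensional hyperbolic block whose normal form is a multiple of $xy$; a purely imaginary pair $\pm i\lambda$ gives a two-dimensional elliptic block, a multiple of $x^2+y^2$; and a genuine complex quadruple $\pm a\pm bi$ with $a,b\neq 0$ gives an irreducible four-dimensional focus-focus block. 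Because the elements of $\mathfrak{h}$ commute and are simultaneously semisimple, I would decompose $\CR^{2n}$ into the joint eigenspaces of the whole family; the symmetry $\lambda\mapsto-\lambda$ pairs these into $\omega$-nondegenerate symplectic summands, and maximality of $\mathfrak{h}$ forces each summand to be one of the three standard blocks. A Darboux basis adapted to this decomposition, together with a suitable choice and rescaling of the generators of $\mathcal{C}$ (which absorbs the eigenvalue moduli, since symplectic coordinate changes cannot), yields the coordinates $(x_1,y_1,\dots,x_n,y_n)$ and the basis $h_1,\dots,h_n$ of the stated form.

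The main obstacle is the interplay between the real structure and the symplectic form in the simultaneous normalisation. Over $\CC$ a commuting family of semisimple operators is simultaneously diagonalisable, but over $\CR$ one must organise complex-conjugate eigenvalue pairs into real invariant subspaces and, crucially, produce on each of them a genuine symplectic basis so that $\omega$ restricts to the Darboux form, all while keeping the entire abelian family in normal form. The subtle case is focus-focus: a genuine complex quadruple cannot be split into two $\omega$-nondegenerate two-dimensional blocks, only into an irreducible four-dimensional one, which is why it contributes a pair of generators rather than a single one. Checking that the normalised generators span a full $n$-dimensional Cartan subalgebra — so that no further independent commuting quadratic form is available — is exactly what fixes the multiplicities $k_e,k_h,k_f$ and completes the classification.
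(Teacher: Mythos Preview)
The paper does not prove this theorem: it is stated without proof and attributed to Williamson's 1936 paper \cite{Wll}, so there is no in-paper argument to compare your proposal against. Your sketch follows the standard route --- passing to $\mathfrak{sp}(2n,\CR)$ via $h\mapsto X_h$, invoking the spectral symmetry $\lambda\mapsto -\lambda,\ \lambda\mapsto\bar\lambda$ of Hamiltonian matrices, and then simultaneously decomposing $\CR^{2n}$ into $\omega$-nondegenerate invariant blocks --- and this is indeed how modern treatments prove Williamson's classification. The outline is sound; the genuinely delicate step you correctly flag is producing a \emph{symplectic} basis on each real invariant block (especially the four-dimensional focus-focus block) while keeping all elements of $\mathfrak{h}$ simultaneously normalised, and your sketch does not carry this out in detail. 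If you want to complete the argument you should exhibit, for each block type, an explicit Darboux basis in which a generic element of $\mathfrak{h}$ takes the desired form, and then verify that the remaining elements of $\mathfrak{h}$ are forced into the same shape by commutativity and maximality.
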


Thus the number of elliptic components $k_e$, hyperbolic
components $k_h$ and focus-focus components $k_h$ is an
invariant of the algebra $\mathcal C$. The triple $(k_e,k_h,k_f)$ with  $n=k_e+k_h+2k_f$ is an invariant of the singularity and it is
called the Williamson type of $\mathcal C$.  Let $h_1,\dots,h_n$ be a Williamson basis of this
Cartan subalgebra. We denote by $X_i$ the Hamiltonian vector field of
$h_i$ with respect to $\omega$. Those vector fields form a basis of the
corresponding Cartan subalgebra of $\mathfrak{sp}(2n,\CR)$, and we say
that a vector field $X_i$ is hyperbolic (resp. elliptic) if the
corresponding function $h_i$ is so. We say that a pair of vector
fields $X_i,X_{i+1}$ defines a focus-focus pair if $X_i$ and $X_{i+1}$ are
the Hamiltonian vector fields associated to functions $h_i$ and
$h_{i+1}$ in a focus-focus pair.

In the local coordinates specified above, the vector fields $X_i$ take
the following form:

\begin{itemize}
\item $X_i$ is an elliptic vector field, \begin{equation}X_i=
  2\left(-y_i\frac{\partial}{\partial x_i} +x_i{\frac{\partial}{\partial
      y_i}}\right) \ ;\end{equation}
\item $X_i$ is a hyperbolic vector field,
  \begin{equation}X_i=-x_i\frac{\partial}{\partial x_i}+y_i{\frac{\partial}{\partial
      y_i}} \ ;\end{equation}
\item $X_i,X_{i+1}$ is a focus-focus pair,
  \begin{equation}X_i=-x_i\frac{\partial}{\partial
    x_{i}}+y_{i}\frac{\partial}{\partial
    y_i}-x_{i+1}\frac{\partial}{\partial
    x_{i+1}}+y_{i+1}\frac{\partial}{\partial y_{i+1}}\end{equation}and
  \begin{equation}X_{i+1}=-x_i\frac{\partial}{\partial
    x_{i+1}}+y_{i+1}\frac{\partial}{\partial
    y_i}+x_{i+1}\frac{\partial}{\partial
    x_i}-y_i\frac{\partial}{\partial y_{i+1}} \ .\end{equation}
\end{itemize}

Assume that $\mathcal{F}$ is a linear foliation on $\CR^{2n}$ with a rank 0 singularity at the
origin $p$ ($\ud f_i=0$ at $p$ for all $i$) of Williamson type $(k_e,k_h,k_f)$; the linear model for the foliation is then generated by the
vector fields above. It turns out that these type of singularities are symplectically linearisable and we can read off the local symplectic geometry of the foliation from the algebraic data associated to the singularity (Williamson type).

This is the content of the following symplectic linearisation result \cite{eli1,eli2,Mi},

\begin{teo}\label{thm:rank0uni}
Let $\omega$ be a symplectic form defined in a neighborhood $U$ of the
origin $p$ for which $\mathcal F$ is generically Lagrangian, then there exists
 a local diffeomorphism $\phi:(U,p)\longrightarrow (\phi(U),p)$
such that $\phi$ preserves the foliation and $\phi^*(\sum_i
\ud x_i\wedge \ud y_i)=\omega$, with $x_i,y_i$ local coordinates on
$(\phi(U),p)$.
\end{teo}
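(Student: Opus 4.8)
The plan is to decouple the two pieces of data and handle them in succession: first straighten the singular foliation to its linear model, forgetting the symplectic form entirely, and then, with the foliation already in normal form, rectify the symplectic form to the Darboux form $\omega_0=\sum_i\ud x_i\wedge\ud y_i$ by a foliation-preserving diffeomorphism obtained from a Moser path argument. Throughout, $\mathcal{F}$ denotes the given foliation and $\mathcal{F}_0$ the linear model generated by the standard vector fields $X_1,\dots,X_n$ displayed above.

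\textbf{Step 1 (linearising the foliation).} I would first produce a local diffeomorphism carrying $\mathcal{F}$ to $\mathcal{F}_0$, equivalently coordinates in which the moment-map components $f_i$ coincide, up to a local diffeomorphism of $\CR^n$, with the Williamson quadratic forms $h_i$ of Theorem~\ref{Willi}. For the elliptic and focus--focus blocks this follows from a Birkhoff/Morse-type normal form exploiting the $S^1$- (resp.\ focus--focus) symmetry generated by the flows of the corresponding $X_i$. The genuinely analytic difficulty sits in the hyperbolic block $-x_i\partial_{x_i}+y_i\partial_{y_i}$, where one linearises the commuting vector fields à la Sternberg--Chen and then controls the residual nonlinearity by a division argument on flat functions. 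I would cite Eliasson \cite{eli1,eli2} for this step rather than reprove it, and assume from here on that $\mathcal{F}=\mathcal{F}_0$.

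\textbf{Step 2 (rectifying the symplectic form).} Now I have two symplectic germs, the given $\omega$ and $\omega_0$, both making $\mathcal{F}_0$ generically Lagrangian. After a preliminary linear symplectic adjustment (an equivariant Darboux lemma at the point, chosen to commute with the Williamson decomposition so that $\mathcal{F}_0$ is preserved) I may assume $\omega$ and $\omega_0$ agree at $p$, so the path $\omega_t=\omega_0+t(\omega-\omega_0)$, $t\in[0,1]$, consists of closed forms that are nondegenerate on a small enough neighbourhood and that restrict to zero along the leaves, Lagrangianity being a linear condition on the form. Writing $\omega-\omega_0=\ud\beta$ by the ordinary Poincaré lemma and solving $\iota_{Y_t}\omega_t=-\beta$ for the time-dependent field $Y_t$, Moser's argument yields a flow $\phi_t$ with $\phi_t^*\omega_t=\omega_0$; taking $\phi=\phi_1^{-1}$ gives $\phi^*\omega_0=\omega$, and $\phi$ preserves $\mathcal{F}_0$ as soon as each $Y_t$ does.

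\textbf{The main obstacle (making $Y_t$ foliate).} The crux is precisely that $\phi_t$ must preserve $\mathcal{F}_0$. Since the leaves are Lagrangian, hence their own $\omega_t$-orthogonal, the Moser field $Y_t$ is tangent to the leaves exactly when the primitive $\beta$ annihilates every leafwise vector, i.e.\ when $\beta$ restricts to zero on each leaf. As $\omega-\omega_0$ already vanishes leafwise, what I need is a \emph{relative} (foliated) Poincaré lemma: a leafwise-vanishing closed $2$-form should admit a leafwise-vanishing primitive $1$-form. This is exactly where the singular foliation misbehaves, since the foliated Poincaré lemma fails in general at nondegenerate singularities. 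I would resolve it block by block: on the elliptic and focus--focus blocks, average $\beta$ over the compact group generated by the corresponding Hamiltonian flows to obtain an invariant primitive with the required leafwise vanishing; on the hyperbolic blocks, where no compact symmetry is available, I expect the real work to be a Hadamard-type division combined with a sharp analysis of Taylor-flat functions splitting off a leafwise-vanishing primitive. This last, purely hyperbolic, estimate is the step I anticipate to be the genuine technical bottleneck.
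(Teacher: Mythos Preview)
The paper does not prove this theorem at all: it is stated as a cited result, with the references \cite{eli1,eli2,Mi} given immediately before the statement (``This is the content of the following symplectic linearisation result \cite{eli1,eli2,Mi}''). There is therefore no in-paper proof to compare your proposal against.

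For what it is worth, the two-step scheme you outline---first linearise the foliation, then run a Moser path with a leafwise-vanishing primitive so that the Moser field is tangent to the foliation---is precisely the strategy carried out in \cite{Mi}, and your identification of the hyperbolic block as the place where the relative/foliated Poincar\'e lemma becomes delicate is accurate. One point to tighten: at singular points the leaves are merely isotropic, not Lagrangian, so ``the leaves are their own $\omega_t$-orthogonal'' is not literally correct there; the right formulation is that you need $\beta$ to vanish on the distribution $\langle X_1,\dots,X_n\rangle$ (equivalently $\imath_{X_i}\beta=0$), which coincides with leafwise vanishing on the regular set and is the condition that makes $Y_t$ belong to the module generated by the $X_i$ everywhere.
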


Furthermore, if $\mathcal{F}'$ is a generically Lagrangian foliation and has $\mathcal{F}$ as a linear foliation model near a point, one can symplectically linearise $\mathcal{F}'$ \cite{Mi}. This is equivalent to Eliasson's theorem \cite{eli1, eli2} in the  completely elliptic case.

There are normal forms for higher rank of $\ud F$  which have been obtained by the first author together with Nguyen Tien Zung \cite{Mi, mirandazungequiv} also in the case of singular nondegenerate compact orbits. When the rank of the singularity is greater than 0, a collection of regular vector fields is attached to it. We can then  reduce the $k$-rank case  to the $0$-rank case via a Marsden-Weinstein reduction associated to a natural Hamiltonian $\mathds{T}^k$-action given by the joint flow of the moment map $F$.


\section{Geometric Quantisation \`{a} la Kostant}\label{gqkostant}

The original idea of Geometric Quantisation is to associate a Hilbert space to a symplectic manifold via a prequantum line bundle and a
polarisation. Usually this is done using flat global sections of the line bundle, however, their existence is a nontrivial matter\footnote{Actually, Rawnsley \cite{JHR} (also \cite{Solha}, under slightly different hypotheses.) showed that the existence of a $S^1$-action may be an obstruction for the existence of nonzero global flat sections}. In case these global sections do not exist, Kostant suggested to consider higher cohomology groups, by taking cohomology with coefficients in the sheaf of flat sections, to define geometric quantisation.\par

\begin{exam}
Consider $M=\CR\times S^1$ with coordinates $(x,y)$ and $\omega=\ud x\wedge\ud y$. Take as $L$ the trivial complex line bundle with connection 1-form $\Theta=x\ud y$ with respect to the unitary section\footnote{Sections of $L$ can be represented by complex-valued functions over local trivialisations. When there is an identification between a section $s$ of the line bundle and a complex-valued function $f$, the bundle isomorphism will be omitted, for the sake of simplicity, and the equality $s=f$ will be used.} $\e^{ix}$ and  $\mathcal{P}=\langle\depp{}{y}\rangle$. Flat sections satisfy $\nabla^\omega_{\depp{}{y}} s=\ud s(\depp{}{y})-i\Theta(\depp{}{y})s=0$. Thus $s(x,y)=f(x)\e^{ixy}$, for some function $f$, and it has period $2\pi$ in $y$ if and only if $x\in\CZ$, for $S^1$ the unity circle. Thus flat sections are only well-defined for the set of points with $x\in\CZ$.
\end{exam}

Let $\mathcal{J}$ denote the space of local sections, $s$, of a prequantum line bundle, $L$, which are solutions of the following differential equation:
 \begin{equation}
\nabla^\omega_X s=0
\end{equation}

 \noindent for all local vector fields $X$ of the polarisation $P$. The space $\mathcal{J}$ has the structure of a sheaf and it is called the sheaf of flat sections.

\begin{defi}
The quantisation of $(M,\omega,L,\nabla^\omega,P)$ is given by
\begin{equation}
\mathcal{Q}(M)=\displaystyle\bigoplus_{k\geq 0} H^k(M;\mathcal{J}) \ ,
\end{equation}where $ H^k(M;\mathcal{J})$ are cohomology groups with values in the sheaf $\mathcal{J}$.
\end{defi}

\begin{rema} Even though $\mathcal{Q}(M)$ is just a vector space and a priori has no Hilbert structure, it will be called quantisation. The true quantisation of the triplet $(M,\omega,L,\nabla^{\omega},P)$ shall be the completion of the vector space $\mathcal{Q}(M)$, after a Hilbert structure is given, together with a Lie algebra homomorphism (possibly defined over a smaller set) between the Poisson algebra of $C^\infty(M)$ and operators on the Hilbert space. In spite of the problems that may exist in order to define geometric quantisation using $\mathcal{Q}(M)$, the first step is to compute this vector space.
\end{rema}


\section{Lie pseudoalgebras and the Kostant complex}

Instead of computing directly the cohomology groups $ H^k(M;\mathcal{J})$, the strategy is to present a resolution of the sheaf $\mathcal{J}$. For regular polarisations this has been done by Kostant \cite{Sni,JHR}. In the singular case this can be achieved via Lie pseudoalgebra representations. This section only recasts geometric quantisation under the language of Lie pseudoalgebras and its representations, the proof that the Kostant complex is a resolution for the sheaf is left to the remaining sections.\par

The set $C^{\infty}(M)$ is a commutative $\CC$-algebra and the polarisation induced by a integrable system $F:M\to\CR^n$ on $(M,\omega)$, $(P=\langle X_1,\dots,X_n \rangle_{C^\infty(M)}, [\cdot,\cdot]\big{|}_P)$, where $X_i$ is the Hamiltonian vector field of the $i$th component of the moment map, is both a $C^{\infty}(M)$-module and a $\CC$-Lie algebra; indeed, a Lie subalgebra of $(\mathfrak{X}(M),[\cdot,\cdot])$. The Lie algebra and $C^\infty(M)$-module structures are compatible in such a way that $(P,C^{\infty}(M),\CC)$ is an example of a Lie pseudoalgebra (see \cite{Mkz} for precise definitions and a nice account for the history and, various, names of this structure).\par

Considering $C^\infty(M)$ as a $C^\infty(M)$-module, $(P, [\cdot,\cdot]\big{|}_P)$ can be represented on $C^\infty(M)$ as vector fields acting on smooth functions.

Let $\Omega_P^k(M)$ denote  the space of multilinear maps $\mathrm{Hom}_{C^\infty(M)}(\wedge^k_{C^\infty(M)}P;\break C^\infty(M))$,
 we can write the following complex (Lie pseudoalgebra cohomology):
\begin{equation}
0\longrightarrow C^\infty_P(M)\hookrightarrow C^\infty(M)\stackrel{\ud_P}{\longrightarrow}\Omega_P^1(M)\stackrel{\ud_P}{\longrightarrow}\cdots\stackrel{\ud_P}{\longrightarrow}\Omega_P^n(M)\stackrel{\ud_P}{\longrightarrow}0 \ ,
\end{equation}
  \noindent where $C^\infty_P(M)=\mathrm{ker}(\ud_P)$ and $\ud_P$ is  the restriction of the de Rham differential to the directions of the polarization. Namely, given $\alpha\in \Omega_P^k(M)$ and $Y_1,\dots,Y_{k+1}\in P$,
the exterior derivative is defined by:
\begin{eqnarray}
\ud_P\alpha(Y_1,\dots,Y_{k+1})&=&\somatorio{i=1}{k+1}(-1)^{i+1}Y_i(\alpha(Y_1,\dots,\hat{Y_i},\dots,Y_{k+1}))  \\
&&+\somatorio{i<j}{}(-1)^{i+j}\alpha([Y_i,Y_j],Y_1,\dots,\hat{Y_i},\dots,\hat{Y_j},\dots,Y_{k+1}) \ .  \nonumber
\end{eqnarray}

The exterior derivative is a coboundary operator and the associated cohomology is denoted by $H^{\ \bullet}_P(M)$.
%

\begin{prop}
The restriction of the connection $\nabla^{\omega}$ to the polarisation, $\nabla:=\nabla^{\omega}\big{|}_P$, defines a representation of the Lie pseudoalgebra $(P,C^{\infty}(M),\CC)$ on $\Gamma(L)$.
\end{prop}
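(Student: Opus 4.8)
The plan is to verify directly the axioms of a Lie pseudoalgebra representation for the operator $\nabla := \nabla^{\omega}\big|_P$ acting on $\Gamma(L)$. Recall that such a representation must assign to each $X\in P$ a $\CC$-linear operator $\nabla_X$ on $\Gamma(L)$ satisfying: $C^\infty(M)$-linearity in the $P$-slot, $\nabla_{fX}=f\nabla_X$; the Leibniz rule $\nabla_X(fs)=X(f)\,s+f\,\nabla_X s$; and the flatness (homomorphism) condition $\nabla_{[X,Y]}=[\nabla_X,\nabla_Y]$, for all $X,Y\in P$, $f\in C^\infty(M)$ and $s\in\Gamma(L)$.

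First I would observe that the first three conditions are nothing but the defining properties of a connection, restricted to vector fields tangent to the polarisation. Since $\nabla^{\omega}$ is a genuine connection on $L$, its restriction $\nabla$ automatically inherits $\CC$-linearity, $C^\infty(M)$-linearity in the $X$ argument, and the Leibniz rule over the anchor (the inclusion $P\hookrightarrow\mathfrak{X}(M)$). Thus the only substantive point to check is the flatness condition, which is precisely the statement that $\nabla$ is a \emph{flat} $P$-connection.

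The key step is then to compute the curvature along polarisation directions. By definition,
\begin{equation}
curv(\nabla^{\omega})(X,Y)\,s = \nabla^{\omega}_X\nabla^{\omega}_Y s - \nabla^{\omega}_Y\nabla^{\omega}_X s - \nabla^{\omega}_{[X,Y]}s \ ,
\end{equation}
and the prequantisation condition gives $curv(\nabla^{\omega})=-i\omega$. Now I would invoke the crucial geometric fact, established in Section \ref{prequantum}, that the leaves of the polarisation are isotropic, so that $\omega(X,Y)=0$ whenever $X,Y\in P$. Hence the curvature vanishes on $P\times P$, and the displayed identity collapses to
\begin{equation}
\nabla_{[X,Y]}s = \nabla_X\nabla_Y s - \nabla_Y\nabla_X s = [\nabla_X,\nabla_Y]\,s \ ,
\end{equation}
which is exactly the homomorphism condition. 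Note that $[X,Y]\in P$, because $P$ is involutive, so $\nabla_{[X,Y]}$ is well-defined.

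I expect the main point to be conceptual rather than computational: recognising that isotropy of the polarisation is what forces flatness. The equation $curv(\nabla^{\omega})=-i\omega$ together with $\omega\big|_P=0$ is the entire content of the proposition. No hard analysis is required; the proof reduces to assembling the standard connection axioms and combining the prequantisation curvature identity with the isotropy guaranteed by the integrability of the moment map and the involutivity of $P$.
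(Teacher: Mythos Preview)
Your proof is correct and follows essentially the same approach as the paper: both verify that the connection axioms give the module/Leibniz compatibility, and then use the curvature identity $curv(\nabla^\omega)=-i\omega$ together with the isotropy of $P$ to obtain $\nabla_{[X,Y]}=[\nabla_X,\nabla_Y]$. The paper's proof is slightly terser, but the logical content is identical.
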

\begin{proof}
The space of sections of the prequantum line bundle $L$ is clearly a $C^{\infty}(M)$-module, and
\begin{equation}\label{eq313}
\nabla:\Gamma(L)\to\Omega_P^1(M)\otimes_{C^\infty(M)}\Gamma (L)
\end{equation}satisfies (by definition) the following property:
\begin{equation}
\nabla(fs)=\ud_Pf\otimes s + f\nabla s\ ,
\end{equation}for any $f\in C^\infty(M)$ and $s\in\Gamma(L)$.\par

If $X,Y\in P$, thinking of $\nabla$ as a linear map from $P$ to endomorphisms of $\Gamma(L)$,
\begin{equation}
\nabla_{[X,Y]}=\nabla_X\circ\nabla_Y-\nabla_Y\circ\nabla_X-curv(\nabla)(X,Y) \ .
\end{equation}But since $curv(\nabla^\omega)=-i\omega$ vanishes along $P$, $curv(\nabla)(X,Y)=0$ and $\nabla$ is a Lie algebra representation of $(P, [\cdot,\cdot]\big{|}_P)$ on $\Gamma(L)$ compatible with their $C^{\infty}(M)$-module structures.
\end{proof}

With respect to line bundle valued polarised forms, i.e., elements of ${S_P}^\bullet(L)=\displaystyle\bigoplus_{k\geq 0}S_P^k(L)$, where $S^k_P(L)=\Omega_P^k(M)\otimes_{C^\infty(M)}\Gamma (L)$, the previous proposition asserts that the degree $+1$ map $\ud^\nabla:{S_P}^\bullet(L)\to {S_P}^\bullet(L)$, defined by
\begin{equation}
\ud^\nabla(\alpha\otimes s)=\ud_P\alpha\otimes s+(-1)^{\mathrm{degree}(\alpha)}\alpha\wedge\nabla s \ ,
\end{equation}is a coboundary.

Thus, the associated Lie pseudoalgebra cohomology of this representantion,\break $H^\bullet({S_P}^\bullet(L))$, induces a complex (on the sheaf level). This complex is called the Kostant complex and is defined as,
\begin{equation}\label{eq72}
0\longrightarrow\mathcal{J}\hookrightarrow\mathcal{S}\stackrel{\nabla}{\longrightarrow}\mathcal{S}_P^1(L)\stackrel{\ud_\nabla}{\longrightarrow}\cdots\stackrel{\ud_\nabla}{\longrightarrow}\mathcal{S}_P^n(L)\stackrel{\ud_\nabla}{\longrightarrow}0 \ ,
\end{equation}with $\mathcal{S}$ denoting the sheaf of sections of the line bundle $L$ and $\mathcal{S}_P^k(L)$ the sheaves associated to $\Omega_P^k(M)\otimes_{C^\infty(M)}\Gamma (L)$.

\begin{rema} The only property of $L$ being used in this article is the existence of flat connections along $P$; any complex line bundle would do, not only a prequantum one ---the results here work if metaplectic correction is included (see \cite{woodhouse} for details about the role of metaplectic correction in Geometric Quantisation).
\end{rema}


\section{The de Rham foliated complex versus the Kostant complex}

In this section a proof of the fact that the Kostant complex is a fine resolution for the sheaf of flat sections is presented  for regular polarisations.  Other results involving nondegenerate singularities and the existence (or nonexistence) of a Poincar\'{e} lemma for the de Rham foliated complex and the Kostant complex are also discussed.

We first need a lemma regarding the curvature of a connection; its proof is provided for convenience.

\begin{lem}\label{unitarypotential2}At a submanifold $N\subset M$ where $curv(\nabla^\omega)\big{|}_{TN}=-i\ud\Theta$, there exists a unitary section such that $\Theta$ is its potential $1$-form.
\end{lem}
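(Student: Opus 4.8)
The plan is to obtain the required section by a unitary gauge transformation of an arbitrary one. First I would use that $L$ is Hermitian to fix, on a neighbourhood of a point inside $N$, some unitary local section $s_0$ of the restricted bundle $L\big{|}_N$, and denote by $\beta$ its potential $1$-form, characterised by $\nabla^\omega s_0=-i\beta\otimes s_0$ along $N$. Compatibility of $\nabla^\omega$ with the Hermitian metric forces $\beta$ to be real valued: differentiating the constant $\langle s_0,s_0\rangle=1$ and substituting $\nabla^\omega s_0=-i\beta\otimes s_0$ yields $\beta=\overline{\beta}$.

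Next I would compare the two presentations of the restricted curvature. On the one hand, the potential of a unitary section computes it as $curv(\nabla^\omega)\big{|}_{TN}=-i\,\ud\beta$; on the other, the hypothesis reads $curv(\nabla^\omega)\big{|}_{TN}=-i\,\ud\Theta$. Hence $\ud(\beta-\Theta)=0$, so the real $1$-form $\beta-\Theta$ is closed on $N$. Shrinking to a contractible set and invoking the ordinary de Rham Poincar\'{e} lemma, I would write $\beta-\Theta=\ud g$ for a smooth real function $g$ on $N$.

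The candidate is then $s:=\e^{ig}s_0$. It is unitary because $|\e^{ig}|=1$ and $s_0$ is, and the Leibniz rule for $\nabla^\omega$ gives, along $N$,
\begin{equation}
\nabla^\omega s=i\,\ud g\otimes s+\e^{ig}\nabla^\omega s_0=-i(\beta-\ud g)\otimes s=-i\Theta\otimes s \ ,
\end{equation}
so that $\Theta$ is exactly the potential $1$-form of $s$, as claimed.

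There is no deep obstacle here; the content is a standard gauge-fixing argument and is intrinsically local. The only points demanding a little care are the reality of $\beta$, which is precisely what guarantees that the gauge factor $\e^{ig}$ is unitary rather than merely invertible, and the contractibility hypothesis implicit in applying the Poincar\'{e} lemma on $N$.
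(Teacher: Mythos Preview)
Your local gauge-fixing argument is correct and is, step for step, the argument the paper carries out on each chart of a cover: pick a unitary section, compare its (real) potential with $\Theta$, integrate the closed difference, and multiply by the resulting phase. So on that level the two proofs coincide.

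The gap is in scope. The lemma, as the paper proves it, produces a unitary section over the whole submanifold $N$, with no contractibility assumption on $N$. Your argument, by your own admission, only yields a section on a contractible neighbourhood of a point; you then absorb the discrepancy into an ``implicit'' hypothesis that is not actually present in the statement. The paper handles this by taking a good (contractible) open cover $\{A_j\}$ of $N$, running your argument on each $A_j$ to obtain unitary sections $r_j$ all with potential $\Theta$, observing that on overlaps any two such sections differ by a nonzero constant (their ratio has vanishing covariant derivative), and then gluing. This covering-and-gluing step is precisely what is missing from your proposal.

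For the record, the only use of the lemma in the paper is in the proof of the next lemma, where $N$ can indeed be taken small and contractible; so your argument would suffice for the application. But as a proof of the lemma as stated, you have done the chart-level work and skipped the globalisation.
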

\begin{proof}Assuming $curv(\nabla^\omega)\big{|}_{TN}=-i\ud\Theta$, let $\mathcal{A}=\{A_j\}_{j\in I}$ be a contractible\footnote{Kostant uses this terminology (page 92 of \cite{Ko}), but it is the same as a good covering, as defined by Bott and Tu (page 42 of \cite{BT82}).} open cover of $N$ such that each $A_j$ is a local trivialisation of $L$ with unitary section $s_j$ (this can always be obtained, e.g. using a convenient cover made of balls with respect to a Riemannian metric). Each unitary section $s_j$ has $\Theta_j$ as a potential $1$-form of $\nabla^\omega$, and since
\begin{equation}
curv(\nabla^\omega)\big{|}_{TA_j}=-i\ud(\Theta\big{|}_{TA_j})=-i\ud\Theta_j \ ,
\end{equation}there exists real-valued functions $f_j\in C^\infty(A_j)$ such that $\Theta\big{|}_{TA_j}=\Theta_j-\ud f_j$. The unitary sections $r_j=\e^{-if_j}s_j$ have $\Theta\big{|}_{TA_j}$ as potential $1$-forms.

Any two sections $r_j$ and $r_k$ such that $A_j\cap A_k\neq\emptyset$ share the same potential $1$-form, and because of that, they differ by a nonzero constant function, $r_j=c_{jk}r_k$ at $A_j\cap A_k$. Trivially, $c_{jk}$ can be extended to the same constant over $A_k$, and $c_{jk}r_k$ is a section defined over $A_k$ such that its restriction to $A_j\cap A_k$ is exactly $r_j$, and it still has $\Theta\big{|}_{TA_k}$ as potential $1$-form. Hence, they can be glued together, using the glueing condition of sheaves, to a unitary section $r$ defined over $N$ and having $\Theta$ as potential $1$-form.
\end{proof}

The following result uses the foliated Poincar\'{e} lemma for regular foliations. A brief account of the foliated Poincar\'{e} lemma can be found in \cite{MiSolha}. The following result is also reproduced in \cite{MiSolha} with a slightly different proof.

\begin{lem}\label{locflat} Given a regular polarisation $P$, there exists a local unitary flat section on each point of $M$.
\end{lem}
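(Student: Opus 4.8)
The plan is to reduce the existence of a local unitary flat section to a sequence of two conditions: first, that the curvature of $\nabla^\omega$ vanishes when restricted to the leaves of the polarisation, and second, that this local flatness lets us apply Lemma~\ref{unitarypotential2}. Concretely, I would work in a foliated chart around an arbitrary point $p\in M$. Since $P$ is regular, by the Frobenius/Sussmann integrability we may choose coordinates $(q_1,\dots,q_n,p_1,\dots,p_n)$ in a neighbourhood $U$ so that the leaves of the polarisation are the level sets $\{p_1=c_1,\dots,p_n=c_n\}$, with the distribution spanned by $\partial/\partial q_1,\dots,\partial/\partial q_n$.

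The key observation is that because the leaves are Lagrangian, $\omega$ restricts to zero on each leaf, and since $curv(\nabla^\omega)=-i\omega$, the curvature vanishes along the leaf directions. The next step is to produce, on a sufficiently small neighbourhood, a potential $1$-form $\Theta$ for the connection whose pullback to each leaf is closed; here is where the foliated Poincar\'{e} lemma for regular foliations (cited just above the statement) enters. That lemma guarantees that a leafwise-closed foliated form is leafwise-exact on a contractible foliated neighbourhood, so we can solve $\ud_P g = \Theta|_P$ for some smooth function $g$. The modified section $r=\e^{-ig}s$, where $s$ is a local unitary section, is then flat along $P$: one checks directly that $\nabla_X r = \nabla_X(\e^{-ig}s) = \e^{-ig}(-i\,\ud g(X) + i\Theta(X))s = 0$ for all $X\in P$, using that $\Theta|_P = \ud_P g$. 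This $r$ is manifestly unitary since $\e^{-ig}$ has modulus one when $g$ is real-valued.

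The step I expect to be the main obstacle is ensuring that the potential $1$-form $\Theta$ and the resulting primitive $g$ can be taken \emph{real-valued}, which is what keeps the section unitary. The naive foliated Poincar\'{e} lemma produces a complex-valued primitive, but unitarity requires $|\e^{-ig}|=1$, i.e.\ $g$ real. The clean way around this is to invoke Lemma~\ref{unitarypotential2} directly: on the foliated neighbourhood, regard a single leaf (or the whole contractible chart foliated by leaves) as the submanifold $N$, where $curv(\nabla^\omega)|_{TN}=-i\,\ud\Theta$ holds with $\Theta$ the restriction of the connection potential; Lemma~\ref{unitarypotential2} then yields a unitary section having $\Theta$ as its potential, and the fact that $\Theta|_P$ is leafwise-exact (by the foliated Poincar\'{e} lemma, with a \emph{real} primitive since $\Theta$ is real in a unitary trivialisation) lets us absorb it by a further real gauge transformation $\e^{-ig}$, giving flatness along $P$ while preserving unitarity.

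In summary, the logical order is: (1) choose a contractible foliated chart using regularity of $P$; (2) note $curv(\nabla^\omega)$ vanishes on leaf directions because the leaves are Lagrangian and the curvature is $-i\omega$; (3) extract a real potential $\Theta$ in a unitary trivialisation and apply the regular foliated Poincar\'{e} lemma to write $\Theta|_P=\ud_P g$ with $g$ real; (4) twist the unitary section by $\e^{-ig}$ to obtain the desired local unitary flat section. The careful point throughout is the bookkeeping of reality conditions, which is exactly what distinguishes a merely flat section from a unitary flat one.
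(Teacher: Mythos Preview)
Your proposal is correct and follows essentially the same idea as the paper: both arguments hinge on the vanishing of $\omega$ along the Lagrangian leaves, then invoke the foliated Poincar\'{e} lemma for regular foliations to find a real primitive, and finally use this primitive to produce a unitary section flat along $P$.

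There is one small organisational difference worth noting. The paper first writes $\omega=\ud\theta$ locally, applies the foliated Poincar\'{e} lemma to the leafwise restriction $\Theta$ of $\theta$ to obtain $f$ with $\ud_P f=\Theta$, and then invokes Lemma~\ref{unitarypotential2} to manufacture a unitary section whose potential is the \emph{modified} $1$-form $\theta-\ud f$ (which vanishes on $P$). Your summary instead begins with an arbitrary local unitary section $s$, observes that its real potential $\Theta$ is $\ud_P$-closed because the curvature vanishes on $P$, and then gauges directly by $\e^{-ig}$ with $g$ real. Your route is slightly more economical in that it does not actually need Lemma~\ref{unitarypotential2} at all; the paper's route, on the other hand, packages the reality bookkeeping inside that lemma. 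Your middle paragraph, where you try to invoke Lemma~\ref{unitarypotential2} to resolve the reality issue, is a bit circular (you would be using it to produce a unitary section with potential $\Theta$, which is what you already started with); but your final summary correctly sidesteps this and resolves reality simply by noting that the potential of a unitary section for a Hermitian connection is real, so the foliated primitive can be taken real.
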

\begin{proof}
The symplectic form is closed, $\ud\omega=0$, thus locally $\omega=\ud\theta$ and, since $P$ is Lagrangian, $\omega$ vanishes in the directions tangent to the leaves of $P$; which implies $\ud_P\Theta=0$, where $\Theta$ is the restriction of $\theta$ in the directions tangent to the leaves of  the polarisation. By the foliated Poincar\'{e} lemma, there exists a function $f$ such that $\ud_Pf=\Theta$; therefore, $\theta-\ud f$ satisfies $\ud(\theta-\ud f)=\omega$ and $\theta-\ud f$ vanishes in the directions tangent to the leaves.\par

Since at any subset of $M$ where $\omega$ is the differential of a $1$-form there exists a unitary section such that its associated potential is this particular $1$-form (lemma \ref{unitarypotential2}), one has a unitary section $s$ satisfying $\nabla^\omega_Xs=-i[\theta-\ud f](X)s$, which is a flat section: $\nabla^\omega_Xs=0$ for any $X\in P$ because $\theta-\ud f$ vanishes in the directions tangent to the leaves.
\end{proof}

As a consequence of the existence of unitary flat sections, elements of $\mathcal{S}^k_P(L)$ which are closed can be interpreted as the germ of closed polarised $k$-forms taking values in the sheaf $\mathcal{J}$: locally, in a trivialising neighbourhood of $L$ with a unitary flat section $s$, a $k$-form $\alpha\otimes s$ is closed if and only if $\ud_P\alpha=0$, because  $\ud_\nabla(\alpha\otimes s)=\ud_P\alpha\otimes s+(-1)^k\alpha\wedge\nabla s$, $s\neq 0$ and $\nabla s=0$. Therefore, together with the foliated Poincar\'{e} lemma, lemma \ref{locflat} implies the exactness of \eqref{eq72}.\par

The sheaves $\mathcal{S}^k_P(L)$ are fine: $\Gamma(L)$ and $\Omega^k_P(M)$ are modules over the ring of functions of $M$, and because of that, they admit partition of unity. Hence, via a Poincar\'{e} lemma, the abstract de Rham theorem \cite{BrD} implies \cite{Sni,JHR}:\par

\begin{teo}[Kostant]\label{fineresolution} The Kostant complex is a fine resolution of $\mathcal{J}$.
 Therefore each of its cohomology groups, $H^k({S_P}^\bullet(L))$, is isomorphic to $ H^k(M;\mathcal{J})$.
\end{teo}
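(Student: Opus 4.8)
The plan is to establish that the Kostant complex provides a fine resolution of $\mathcal{J}$ by verifying the two hypotheses required to invoke the abstract de Rham theorem: exactness of the complex at the sheaf level (i.e. a Poincar\'{e} lemma), and fineness of each sheaf $\mathcal{S}^k_P(L)$. The theorem statement itself collects the work done in the lemmas preceding it, so the proof is largely an assembly argument. First I would recall that exactness of the sequence \eqref{eq72} as a sequence of sheaves is a purely local (stalkwise) condition: it suffices to show that on a sufficiently small neighbourhood of each point the augmented complex is acyclic. For regular polarisations this is exactly what the combination of Lemma \ref{locflat} and the foliated Poincar\'{e} lemma delivers.

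The key mechanism, which I would make explicit, is the reduction of the twisted (bundle-valued) Poincar\'{e} lemma to the untwisted foliated one. By Lemma \ref{locflat}, every point of $M$ admits a neighbourhood carrying a unitary flat section $s$ of $L$. Over such a neighbourhood any element of $\mathcal{S}^k_P(L)$ can be written uniquely as $\alpha\otimes s$ with $\alpha\in\Omega^k_P(M)$, and since $\nabla s=0$ the defining formula
\begin{equation}
\ud_\nabla(\alpha\otimes s)=\ud_P\alpha\otimes s+(-1)^k\alpha\wedge\nabla s
\end{equation}
collapses to $\ud_\nabla(\alpha\otimes s)=\ud_P\alpha\otimes s$. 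Because $s$ is nowhere zero, $\ud_\nabla(\alpha\otimes s)=0$ is equivalent to $\ud_P\alpha=0$, and the foliated Poincar\'{e} lemma then furnishes $\beta$ with $\ud_P\beta=\alpha$, whence $\ud_\nabla(\beta\otimes s)=\alpha\otimes s$. This shows the Kostant differential is exact in positive degrees locally, while the kernel of $\nabla$ in degree $0$ is by definition $\mathcal{J}$, so the augmented complex \eqref{eq72} is an exact sequence of sheaves; this is the content already sketched in the paragraph preceding the statement.

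Next I would address fineness. Each $\mathcal{S}^k_P(L)$ is the sheaf associated to $\Omega^k_P(M)\otimes_{C^\infty(M)}\Gamma(L)$, and both $\Omega^k_P(M)$ and $\Gamma(L)$ are modules over $C^\infty(M)$; since $M$ is paracompact it admits smooth partitions of unity, and multiplication by a partition-of-unity function gives sheaf endomorphisms subordinate to any open cover. Hence each $\mathcal{S}^k_P(L)$ admits partitions of unity and is fine, so in particular acyclic: $H^q(M;\mathcal{S}^k_P(L))=0$ for $q>0$. With a fine (hence acyclic) resolution of $\mathcal{J}$ in hand, the abstract de Rham theorem \cite{BrD} identifies the sheaf cohomology $H^k(M;\mathcal{J})$ with the cohomology $H^k({S_P}^\bullet(L))$ of the complex of global sections, which is precisely the asserted isomorphism.

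\textbf{The main obstacle} I would flag is not in this assembly but in its scope: the entire argument rests on the availability of a local unitary flat section (Lemma \ref{locflat}), which in turn uses the foliated Poincar\'{e} lemma, and this is only established here for \emph{regular} polarisations. The delicate point is that the reduction above is completely clean precisely because in the regular case both ingredients are in place; the honest limitation to record is that for singular polarisations the foliated Poincar\'{e} lemma fails (as noted in the introduction), so this proof of Theorem \ref{fineresolution} does not extend verbatim, and recovering the conclusion in the singular case is exactly what the remainder of the paper must accomplish by other means. Within the regular setting, the only genuinely technical verification is that $\ud_\nabla$ does descend to a well-defined sheaf map independent of the choice of local flat section $s$, which follows because two such sections differ by a locally constant unit and the flatness condition is preserved under that gauge change.
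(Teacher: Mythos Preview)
Your proposal is correct and follows essentially the same approach as the paper: reduce local exactness of the Kostant complex to the foliated Poincar\'{e} lemma via a local unitary flat section (Lemma \ref{locflat}), argue fineness from the $C^\infty(M)$-module structure and partitions of unity, and conclude by the abstract de Rham theorem. Your write-up is more explicit than the paper's, and your caveat about the scope being limited to regular polarisations is exactly the point the paper makes immediately after stating the theorem.
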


It is important to notice that the proof of lemma \ref{locflat} relies on the existence of a Poincar\'{e} lemma for foliations. When the foliation is not regular such a theorem might not exist, and the proof of lemma \ref{locflat} is of no use; therefore, one needs a different method to prove that the Kostant complex is a fine resolution for the sheaf of flat sections.

This is exactly the situation for polarisations induced by nondegenerate integrable systems, for which we proved in \cite{MiSolha} that there is no Poincar\'{e} lemma for the foliated complex.

\begin{teo}[Miranda and Solha] The foliated Poincar\'{e} lemma does not hold for foliations defined by integrable systems with nondegenerate singularities of Williams\-on type $(k_e, k_h, 0)$.
\end{teo}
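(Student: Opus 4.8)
The plan is to prove non-exactness by exhibiting, near the singular point, a $\ud_P$-closed foliated form of top degree that is not $\ud_P$-exact, thereby showing that $H^n_P$ is nonzero on every neighbourhood of the singularity. Since foliated cohomology depends only on the foliation and is invariant under foliation-preserving diffeomorphisms, and since by Eliasson's linearisation (the remark following Theorem \ref{thm:rank0uni}) a nondegenerate rank-$0$ singularity of type $(k_e,k_h,0)$ is locally equivalent to its linear model, I would work directly in the linear model: $M=\CR^{2n}$ with $P=\langle X_1,\dots,X_n\rangle_{C^\infty(M)}$, the $X_i$ being the explicit elliptic and hyperbolic linear vector fields listed above.

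The first step is to record two structural facts about this model. First, the generators commute: $[X_i,X_j]=X_{\{h_i,h_j\}}=0$, because the Williamson functions $h_i$ are pairwise in involution. Second, each $X_i$ is a linear vector field, hence vanishes at the origin $p$. I would also check that $P$ is a free $C^\infty(M)$-module with basis $X_1,\dots,X_n$: a relation $\sum_i f_i X_i=0$ forces $f_i=0$ on the open dense set where the $X_i$ are pointwise independent (each $X_i$ lives in its own coordinate plane and is nonzero off $\{x_i=y_i=0\}$), hence everywhere by continuity. Consequently $\wedge^n_{C^\infty(M)}P$ is free of rank one, and so the top foliated forms $\Omega_P^n(M)=\mathrm{Hom}_{C^\infty(M)}(\wedge^n_{C^\infty(M)}P;C^\infty(M))$ are free of rank one; let $\mu$ be the generator dual to $X_1\wedge\cdots\wedge X_n$, i.e. $\mu(X_1,\dots,X_n)=1$.

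The core of the proof is the observation that the value of any exact top form on the frame $(X_1,\dots,X_n)$ must vanish at $p$. Indeed, for $\gamma\in\Omega_P^{n-1}(M)$ the bracket terms in the formula for $\ud_P\gamma$ drop out by commutativity, leaving
\begin{equation}
(\ud_P\gamma)(X_1,\dots,X_n)=\somatorio{i=1}{n}(-1)^{i+1}X_i\big(\gamma(X_1,\dots,\hat{X_i},\dots,X_n)\big) \ .
\end{equation}
Each summand is $X_i$ applied to a smooth function, and since $X_i(p)=0$ every such term vanishes at $p$. Hence $(\ud_P\gamma)(X_1,\dots,X_n)(p)=0$ for all $\gamma$, whereas $\mu(X_1,\dots,X_n)(p)=1$. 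Therefore $\mu$ is $\ud_P$-closed (being of top degree) but not $\ud_P$-exact on any neighbourhood of $p$, so the foliated Poincar\'{e} lemma fails.

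I expect the only genuinely delicate point to be the module-theoretic bookkeeping of the second step, namely confirming that $P$ is free and that $\mu$ is well defined despite the degeneration of the frame $X_1,\dots,X_n$ along the singular locus; the cohomological heart of the argument is the elementary evaluation at the fixed point. I would also remark that the proof uses only the commutativity of the $X_i$ and their vanishing at $p$, both of which hold for every nondegenerate rank-$0$ singularity, so the restriction to type $(k_e,k_h,0)$ reflects the scope treated here rather than an essential limitation of the method.
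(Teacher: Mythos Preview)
Your argument is correct and more elementary than the approach of \cite{MiSolha} that the paper cites for this result. There the failure of the foliated Poincar\'{e} lemma is established by explicitly computing the local cohomology groups in degree~$1$ and in top degree; in the $2$-dimensional hyperbolic model, for instance, the equation $X(g)=f$ is formally obstructed by every diagonal Taylor coefficient $f_{k,k}$, so $H^1_P$ comes out infinite-dimensional. Your route bypasses all of this analysis: freeness of $P$ manufactures the algebraic ``volume form'' $\mu$, and evaluation at the common zero of the generators gives a single numerical obstruction to exactness in top degree. This suffices for the theorem and, as you note, applies verbatim to any rank-$0$ nondegenerate singularity, focus-focus included. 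What the explicit computation in \cite{MiSolha} buys is the actual size and structure of the local cohomology, information that feeds into the later treatment of the Kostant complex.

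One caveat worth recording. Your form $\mu$, with $\mu(X_1,\dots,X_n)\equiv 1$, is in direct conflict with Proposition~\ref{welldefineform} of the present paper, which asserts that $\alpha(X_{j_1},\dots,X_{j_k})$ vanishes on the singular set for \emph{every} $\alpha\in\Omega_P^k$. The proof of that proposition silently treats $\alpha$ as tensorial (i.e.\ pointwise), which is appropriate for restrictions of ambient differential forms but not for arbitrary elements of $\mathrm{Hom}_{C^\infty(M)}(\wedge^k P;C^\infty(M))$: since $P$ is free, $X_i$ is nonzero in $P/\mathfrak{m}_pP$ even though $X_i(p)=0$ in $T_pM$. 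Under the paper's stated module-theoretic definition of $\Omega_P^k$ your $\mu$ is legitimate and your proof stands; under the ``restriction of ambient forms'' reading your $\mu$ does not exist and one is forced back to the explicit-obstruction argument. Be explicit about which convention you adopt.
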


In \cite{MiSolha} we explicitly computed the cohomology groups in some instances ---in particular degree 1 and top degree for smooth systems and  in all the degrees for analytic ones. Thus, in order to prove a Poincar\'{e} lemma for the Kostant complex, different strategies need to be adopted. Luckily, it is possible to prove Poincar\'{e} lemmata for the Kostant complex when almost toric nondegenerate singularities are included in the picture.

The following result is contained in \cite{Solha}:

\begin{teo}[Solha]\label{solhateo} The Kostant complex is a fine resolution for $\mathcal{J}$ when $\mathcal{P}$ is given by a locally toric singular Lagrangian fibration or an almost toric fibration in dimension $4$.
\end{teo}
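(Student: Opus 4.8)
The plan is to reduce the assertion to a purely local statement and then manufacture a contracting homotopy out of the Hamiltonian circle actions attached to the elliptic and focus-focus blocks. By Theorem~\ref{fineresolution} and the abstract de Rham theorem, the sheaves $\mathcal{S}^k_P(L)$ are automatically fine --- they are $C^\infty(M)$-modules and so admit partitions of unity --- so the entire content of the statement is a Poincar\'e lemma for $\ud_\nabla$: exactness of the complex \eqref{eq72} at the level of stalks. The delicate point is that Lemma~\ref{locflat} is unavailable at the singular leaves, so one cannot reduce $\ud_\nabla$ to the untwisted foliated differential; the twisting by $\nabla$ has to be used in an essential way.

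First I would pass to the linear model. By the symplectic linearisation Theorem~\ref{thm:rank0uni}, a neighbourhood of any point is isomorphic, foliation and all, to the Williamson normal form, which in the locally toric case is of type $(k_e,0,0)$ and in the almost toric four-dimensional case is one of $(1,0,0)$, $(2,0,0)$ or $(0,0,1)$. Such a model is a symplectic product of two-dimensional elliptic blocks, a single four-dimensional focus-focus block, and regular directions, and the Kostant complex of a product is the (completed) tensor product of the complexes of the factors. A K\"unneth argument then reduces the Poincar\'e lemma to three model cases: a regular factor, one elliptic block, and one focus-focus block. On a regular factor Lemma~\ref{locflat} furnishes a local unitary flat section, $\ud_\nabla$ becomes the ordinary foliated differential, and the classical foliated Poincar\'e lemma closes the case.

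The heart of the argument is the elliptic and focus-focus blocks, treated through the circle action. In the elliptic block the polarisation is generated by the rotational field $X_i$, and in the focus-focus block by the commuting pair $X_i$ (non-compact, hyperbolic type) and $X_{i+1}$, the latter generating a Hamiltonian $S^1$-action fixing only the origin. Writing $X$ for such an $S^1$-generator and using the twisted Cartan formula $\mathcal{L}^\nabla_X=\ud_\nabla\,\imath_X+\imath_X\,\ud_\nabla$, I would expand $L$-valued polarised forms into isotypic (Fourier) components along the orbits of $X$. On every nonzero component $\mathcal{L}^\nabla_X$ is invertible, so $h=(\mathcal{L}^\nabla_X)^{-1}\imath_X$ is a contracting homotopy there and a closed form is cohomologous to its $S^1$-invariant component. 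In a unitary trivialisation $\mathcal{L}^\nabla_X$ acts on that invariant component as multiplication by $-i\,\Theta(X)$, a nonzero multiple of the block Hamiltonian $h$ away from the origin; dividing by it --- and, in the focus-focus case, composing with a further homotopy adapted to the non-compact generator $X_i$ built from the linear scaling structure of the block --- produces the desired primitive on the complement of the singular fibre.

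The main obstacle, and the genuinely hard step, is smoothness across the fixed point of the $S^1$-action --- the origin of each block --- where $X$ vanishes and $\Theta(X)$ degenerates to zero. Here the primitive forces one to divide the invariant part of the form by $\Theta(X)=h$, equivalently to invert $\mathcal{L}^\nabla_X$ whose invariant-sector symbol is $\bigl(1-\e^{2\pi i\,\Theta(X)}\bigr)^{-1}$ along the collapsing orbits; as the orbit shrinks this develops a pole governed by the holonomy of the flat connection. The decisive estimate is to control this division: one must show that the $S^1$-averaging, together with the $|n|$-th order vanishing at the origin of the $n$-th Fourier component of a smooth form, tames the pole so that the primitive manufactured off the fixed point extends to a \emph{smooth} section across it. This is precisely the point at which the twisting is indispensable: the very holonomy $\e^{2\pi i\,\Theta(X)}$ whose nontriviality destroys the untwisted foliated Poincar\'e lemma (the theorem of Miranda and Solha) is what renders the twisted resolvent analytically controllable. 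Verifying this smoothness --- and tracking the interaction of the resolvent with the Taylor expansion of the form transverse to the singular fibre --- is where the real work concentrates; everything outside this fixed-point analysis is formal homological algebra.
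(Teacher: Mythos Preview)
The paper does not prove this theorem; it merely quotes it from Solha's thesis \cite{Solha} and records a one-line summary: ``the proof of this theorem (corollary 9.2, proposition 10.3 and results in subsection 10.2 of \cite{Solha}) is based on the existence of symplectic circle actions.'' Your outline is consistent with that summary and captures the right mechanism --- the twisted Cartan identity $\mathcal{L}^\nabla_X=\ud_\nabla\iota_X+\iota_X\ud_\nabla$ for a periodic Hamiltonian generator, together with a fixed-point analysis --- so at the level of strategy you are aligned with the cited approach.

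Two points deserve sharpening, and one is a genuine methodological difference from what the paper describes.

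\textbf{K\"unneth versus commuting homotopies.} You reduce the product model to blocks via a K\"unneth argument. The paper explicitly avoids this route (see the discussion opening Section~7): completed tensor products of Fr\'echet sheaves are delicate, and the cohomology groups here need not be finite-dimensional a priori. What \cite{Solha} and \cite{MiSolha} do instead, as summarised in Section~7.2, is to combine \emph{explicit, commuting} homotopy operators --- one for each circle in the torus action --- into a single closed-form homotopy. This bypasses K\"unneth entirely.

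\textbf{Location of the pole.} Your ``decisive estimate'' conflates two separate issues. For the $n\neq 0$ Fourier components the twisted eigenvalue at the origin is $in\cdot\mathrm{(period)}^{-1}\neq 0$; there is no pole there, and the $|n|$-th order vanishing of the $n$-th mode is relevant only for reassembling a smooth function from its Fourier series, not for inverting $\mathcal{L}^\nabla_X$. The actual division by $h=\Theta(X)$ occurs solely on the $n=0$ (invariant) component. What makes it succeed is not a subtle estimate but a structural fact: by Proposition~\ref{welldefineform} the datum $\alpha(X)$ vanishes at the origin, and its $S^1$-average is a smooth function of the invariant $h$ vanishing at $h=0$, hence smoothly divisible by $h$. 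This is the step where the twisting is essential, as you correctly note, but the mechanism is algebraic (invariant theory of $S^1$ on $\CR^2$) rather than an asymptotic estimate.

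\textbf{Focus-focus.} Your treatment of the non-periodic generator $X_i$ (``a further homotopy \dots\ built from the linear scaling structure'') is too vague to assess. The flow of $X_i$ is a non-compact dilation and integrating along it does not converge without input analogous to the Taylor-flat analysis in Section~6; simply invoking ``scaling'' does not produce a homotopy. The cited argument in \cite{Solha} handles this case, but your sketch does not yet indicate how.
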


The proof of this theorem  (corollary 9.2, proposition 10.3 and results in subsection 10.2 of \cite{Solha}) is based on the existence of symplectic circle actions. Hyperbolic singularities do not share the same kind of symmetry as elliptic or focus-focus, i.e.: there is no natural symplectic circle action near purely hyperbolic singularities. Thus, again, the proof cannot be adapted to include hyperbolic singularities, and the next section is devoted to prove a Poincar\'{e} lemma for this remaining case.


\section{A Poincar\'{e} lemma for the Kostant complex with hyperbolic singularities}

We start this section fixing some notation for hyperbolic singularities.
Let $(M=\CR^2,\omega=\ud x\wedge\ud y)$ and $h:M\to\CR$ be a nondegenerate integrable system of hyperbolic type, i.e.: $h(x,y)=xy$. For this case, the real polarisation is $\mathcal{P}=\langle X\rangle$, with $X$ the Hamiltonian vector field $-x\depp{}{x}+y\depp{}{y}$.\par

$(M,\omega)$ is an exact symplectic manifold and the trivial line bundle is a prequantum line bundle for it: $L=\CC\times\CR^2$ with connection 1-form $\Theta=\frac{1}{2}(x\ud y-y\ud x)$ with respect to the unitary section $\e^{ih}$.

Consider a section $f\e^{ih}$ of the prequantum line bundle, the
 flat section equation can be written as,
\begin{equation}
\nabla f\e^{ih}=0 \ \Leftrightarrow \ X(f)=ihf \ .
\end{equation}This equation has been studied in \cite{HaMi}. Let us recall here  proposition 3.5 of that article.

\begin{prop}[Hamilton and Miranda]\label{fourquad}
Any flat section $s$ can be written as a collection
\begin{equation}
s_j = a_j(xy) e^{\frac{i}{2} xy \ln \vert{\frac{x}{y}}\vert}\qquad j=1,2,3,4 \ ;
\end{equation}where $a_j$ is a complex-valued smooth function of one variable, Taylor flat at $0$   \footnote{All Taylor coefficients are equal to zero.}, with domain such that $a_j(xy)$ is defined on the $j^\text{th}$ open quadrant of $\mathbb R^2$. Conversely, given four such $a_j$, they fit together to define a flat section $s$ using the formula above.
\end{prop}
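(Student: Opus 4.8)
The plan is to solve $X(f)=ihf$ first on each open quadrant, where the foliation is regular, and then to decide which of the quadrant-wise solutions glue into a genuinely smooth section across the axes $\{xy=0\}$ and the singular origin. On a fixed open quadrant, say $Q=\{x>0,\,y>0\}$, the map $(x,y)\mapsto(u,v):=(xy,\ln|x/y|)$ is a diffeomorphism onto $(0,\infty)\times\CR$ sending the leaves $\{xy=\mathrm{const}\}$ to the lines $\{u=\mathrm{const}\}$. A direct computation gives $X(u)=0$ and $X(v)=-2$, so $X=-2\,\partial/\partial v$, and the flat-section equation becomes the linear ODE $-2\,\partial_v f=iuf$ in $v$ with $u$ a mere parameter. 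Its general solution is $f=a(u)\,e^{c\,uv}$ with $2c=-i$ and $a$ an arbitrary function of $u$ alone; smooth dependence on the parameter $u$ gives $a\in C^\infty$. Back in $(x,y)$ this is exactly the quadrant-wise form asserted in the statement (the precise imaginary constant in the exponent depends only on the sign conventions used for the connection and is immaterial for what follows), with $a$ an arbitrary smooth function of $xy$ on the relevant interval; reading the computation backwards already gives the converse on each open quadrant.

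The core of the argument is to detect which quadrant data extend smoothly across $\{xy=0\}$. Approach the positive $y$-axis along $x\to0^+$ with $y>0$ fixed, so that $u=xy\to0$ while $v=\ln|x/y|\to-\infty$; since $uv=xy\ln|x/y|\to0$ the unit-modulus factor $e^{cuv}$ tends to $1$ and $f\to a(0)$, which already forces $a$ to extend continuously to $0$. Differentiating $f=a(u)e^{cuv}$ repeatedly in the transverse direction $x$ yields, by the chain rule, a finite sum of terms of the shape $e^{cuv}$ times a polynomial in $v=\ln|x/y|$, times a derivative $a^{(k)}(u)$, times a power $x^{-m}$ (the negative powers arising solely from $\partial_x v=1/x$). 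Tracking the leading singular contribution order by order, one checks that $\partial_x f$ has a finite limit iff $a(0)=0$, that $\partial_x^{2}f$ does iff moreover $a'(0)=0$, and inductively that $\partial_x^{\,n}f$ does iff $a(0)=a'(0)=\cdots=a^{(n-1)}(0)=0$. Demanding this for every $n$, and running the same analysis at the three remaining half-axes, forces each $a_j$ to be Taylor flat at $0$.

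For the converse, suppose every $a_j$ is flat at $0$. Taylor's theorem with remainder then gives, for all $k$ and $N$, bounds $|a_j^{(k)}(u)|\le C_{k,N}\,|u|^{N}$ near $0$; since $|u|=|xy|$ is comparable to a power of the distance to the axis, these flat bounds dominate every factor $x^{-m}(\ln|x/y|)^{p}$ appearing above, uniformly up to the axes and the origin. Hence every partial derivative of each $s_j$ tends to $0$ along $\{xy=0\}$, so the four pieces and all their partials extend continuously by $0$ across the axes and the origin, and a standard removable-singularity argument (a function smooth off the closed set $\{xy=0\}$ all of whose partials extend continuously is itself smooth) assembles them into a single $s\in C^\infty(\CR^2)$. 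This $s$ is flat: it solves $X(f)=ihf$ on the open quadrants by construction, and on $\{xy=0\}$ both sides vanish (there $h=0$ and all derivatives of $f$ are $0$), so the equation holds everywhere by continuity.

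I expect the necessity direction of the second paragraph to be the main obstacle: iterated differentiation mixes powers of $\ln|x/y|$ with negative powers of $x$, and one must isolate at each order the single term carrying $a^{(n-1)}(0)$ and verify that it cannot be cancelled by the others, so that a nonzero Taylor coefficient really does destroy $C^\infty$-smoothness. Controlling these competing logarithmic and power-type blow-ups is the technical heart of the proof; the sufficiency direction is, by comparison, a routine flat-function estimate.
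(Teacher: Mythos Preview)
The paper does not itself prove this proposition; it is recalled from \cite{HaMi} (proposition~3.5 there) and used as input, so there is no in-paper argument against which to compare your proposal.

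On its own merits your outline is sound. The change of variables $(u,v)=(xy,\ln|x/y|)$ straightening $X$ to $-2\,\partial_v$ on each open quadrant is the standard normalisation, and the general solution $a(u)\exp(-\tfrac{i}{2}uv)$ follows immediately (the sign ambiguity you note is also flagged in the paper just after the statement). The sufficiency direction is correctly handled: the flat-function bounds $|a_j^{(k)}(u)|\le C_{k,N}|u|^N$ do dominate every factor $x^{-m}(\ln|x/y|)^p$ produced by iterated differentiation, the four pieces and all their partials therefore extend continuously by $0$ across $\{xy=0\}$, and the gluing principle you cite (continuous extension of all partials across a closed nowhere-dense set implies smoothness) is standard. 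Your necessity sketch is also correct; if you wish to avoid isolating the ``leading singular term'' by brute combinatorics, an equivalent and cleaner route is to differentiate the identity $X(f)=ihf$ repeatedly in $x$, restrict to $x=0$, and obtain a recursive family of linear ODEs in $y$ for the traces $y\mapsto\partial_x^k f(0,y)$, whose smoothness at $y=0$ forces the Taylor coefficients of $a$ to vanish one at a time. Either way the induction closes.
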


 Thus (up to a different choice of sign) this implies that
\begin{equation}
f(x,y)\e^{ixy}=\left\{\begin{array}{ll}
0 & \text{if} \ x=0,y=0 \\
a_1(xy)\e^{\frac{i}{2}xy\ln\frac{y}{x}} & \text{if} \ x>0,y>0 \\
a_2(xy)\e^{\frac{i}{2}xy\ln\frac{-y}{x}} & \text{if} \ x>0,y<0 \\
a_3(xy)\e^{\frac{i}{2}xy\ln\frac{y}{-x}} & \text{if} \ x<0,y>0 \\
a_4(xy)\e^{\frac{i}{2}xy\ln\frac{y}{x}} & \text{if} \ x<0,y<0 \ ,
\end{array}\right.
\end{equation}where $a_j$ is a smooth complex-valued function of one variable (defined for $z\in[0,\infty)$ if $j=1,4$ or $z\in(-\infty,0]$ if $j=2,3$) and such that $\devv{^ka_j}{z^k}(0)=0$ for all $j$ and $k$.

The converse of proposition \ref{fourquad} guarantees that $H^0({S_P}^\bullet(L))$ is not trivial and is given by quadruples of Taylor flat smooth complex-valued functions of one variable, as above.\par

We also need the following property of polarised forms (proposition 5.1 in \cite{MiSolha}),

\begin{prop}\label{welldefineform}If $\alpha$ is a polarised $k$-form, $\alpha\in\Omega_P^k(\CR^{2n})$, then the following equality holds,
\begin{equation}
\alpha(X_{j_1},\dots,X_{j_k})\big{|}_{\Sigma_{j_1}\cup\cdots\cup\Sigma_{j_k}}=0 \ ,
\end{equation}where $\Sigma_i=\{p\in\CR^{2n} \ ; \ x_i(p)=y_i(p)=0\}$ denotes the vanishing set of a vector field of a Williamson basis $X_i$.
\end{prop}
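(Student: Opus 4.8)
The plan is to peel the statement down to a pointwise, linear-algebraic fact about a single vanishing set and then invoke the tensorial (pointwise) nature of the evaluation. First I would use the alternating character of $\alpha\in\Omega_P^k(\CR^{2n})$ to reduce to proving the vanishing on one $\Sigma_{j_l}$: since $\alpha$ is totally antisymmetric, permuting its arguments only changes $\alpha(X_{j_1},\dots,X_{j_k})$ by a sign, so it suffices to show that the smooth function $g:=\alpha(X_{j_1},\dots,X_{j_k})$ vanishes on $\Sigma_{j_1}$; applying the same argument with each index moved into the first slot then yields vanishing on the whole union $\Sigma_{j_1}\cup\cdots\cup\Sigma_{j_k}$.

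Next, fixing a point $p\in\Sigma_{j_1}$, I would argue that the value $g(p)$ depends only on the tangent vectors $X_{j_1}(p),\dots,X_{j_k}(p)$, i.e.\ that $g(p)=\alpha_p\big(X_{j_1}(p),\dots,X_{j_k}(p)\big)$ for an induced alternating form $\alpha_p$ on $T_pM$. Granting this, the proof is immediate: by construction $\Sigma_{j_1}$ is the zero set of $X_{j_1}$ (this is visible directly from the Williamson normal forms of Theorem \ref{Willi}, where the elliptic and hyperbolic generators vanish exactly on $\{x_{j_1}=y_{j_1}=0\}$), so $X_{j_1}(p)=0$, one of the arguments of the alternating form $\alpha_p$ is the zero vector, and hence $g(p)=0$.

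The genuinely delicate point---and the step I expect to be the main obstacle---is justifying this pointwise/tensorial character of the evaluation \emph{on the singular locus itself}. Off $\Sigma_{j_1}$ the distribution $P$ is a genuine vector bundle, so the standard tensoriality argument (write a $P$-vector in a local frame, peel off the coefficients with bump functions, and use $C^\infty(M)$-linearity of $\alpha$) applies and gives $g=0$ there whenever $X_{j_1}=0$. But precisely on $\Sigma_{j_1}$ the pointwise rank of $P$ drops, $P$ is no longer locally free, and no local frame is available, so that reasoning cannot be run directly---indeed an abstract $C^\infty(M)$-linear functional on the singular module $P$ need not vanish there. I would resolve this by using that the polarised forms in play are restrictions of honest differential forms on $\CR^{2n}$ (consistent with $\ud_P$ being the restriction of the de Rham differential): for $\alpha=\beta\big{|}_P$ with $\beta\in\Omega^k(\CR^{2n})$ one has $g=\beta(X_{j_1},\dots,X_{j_k})$, whose value at $p$ is manifestly tensorial, and the vanishing $X_{j_1}(p)=0$ then finishes the argument. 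The whole content of the proposition is thus to transport the pointwise vanishing, trivial away from the singular set, across the singular set where the naive module-theoretic reasoning breaks down.
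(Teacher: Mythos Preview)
Your approach is essentially the paper's: both arguments reduce to the pointwise observation that $X_{j_l}(p)=0$ for $p\in\Sigma_{j_l}$, and then invoke multilinearity of $\alpha_p$ to conclude. The paper's proof is two lines---it simply asserts that at each $p$ the map $\alpha$ ``reduces to an element of the dual of $\wedge^k P|_p$'' and then uses $X_i(p)=0$.

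You are more scrupulous than the paper on exactly the point you flag: with the purely module-theoretic definition $\Omega_P^k(M)=\mathrm{Hom}_{C^\infty(M)}(\wedge^k P;C^\infty(M))$, the existence of a pointwise value $\alpha_p$ is not automatic where $P$ fails to be locally free (indeed, for $P=\langle X\rangle$ with $X=-x\partial_x+y\partial_y$ the assignment $X\mapsto 1$ is a legitimate $C^\infty$-linear functional that does not vanish at the origin). The paper's one-line proof glosses over this. Your resolution---reading $\Omega_P^k$ as restrictions of honest forms in $\Omega^k(\CR^{2n})$, which is how the paper describes $\ud_P$ and how the Kostant complex is built in the regular case---is the natural fix and makes the pointwise argument rigorous. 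So your proof is the same in spirit but strictly more careful; the only caveat is that this interpretation of $\Omega_P^k$ is an assumption about the intended definition rather than something you can derive from the $\mathrm{Hom}$-description alone.
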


\begin{proof} At every point $p\in \CR^{2n}$ the map $\alpha\in\Omega_P^k(\CR^{2n})$ reduces to an element of the dual of $\wedge^k{P}\big{|}_p$, which is a finite dimensional vector space. Since $X_i=0$ at $\Sigma_i$, for any $p\in\Sigma_i$ and vectors $Y_1(p),\dots,Y_{k-1}(p)\in P\big{|}_p$, the  expression
$\alpha_p(X_i(p),Y_1(p),\dots,Y_{k-1}(p))$ vanishes.
\end{proof}
For the computation of the first cohomology group the strategy is going to be close to the one used in \cite{Mi} and \cite{MiNg}: firstly, a formal solution is obtained and then a closed-form expression is given for the case of flat functions.

A $1$-form $\alpha\otimes\e^{ih}\in S_P^1(L)$ is exact if and only if there exists a $g\in C^\infty(\CR^2)$ satisfying
\begin{equation}\label{cohoequa}
\nabla g\e^{ih}=\alpha\otimes\e^{ih} \ \Leftrightarrow \ X(g)=ihg+\alpha(X) \ .
\end{equation}

We denote the Taylor series in $(x,y)$ of $g$ and $\alpha(X)$ near the origin $(0,0)\in\CR^2$ by,
\begin{equation}
\Somatorio{k,l=0}{\infty}g_{k,l}x^ky^l
\end{equation}and
\begin{equation}
\Somatorio{k,l=0}{\infty}f_{k,l}x^ky^l \ ,
\end{equation}with $f_{0,0}=0$, due to proposition \ref{welldefineform}.

The cohomological equation \eqref{cohoequa} in jets, then, reads
\begin{equation}
\Somatorio{k,l=0}{\infty}(l-k)g_{k,l}x^ky^l=\sqrt{-1}\Somatorio{k,l=0}{\infty}g_{k,l}x^{k+1}y^{l+1}+\Somatorio{k,l=0}{\infty}f_{k,l}x^ky^l \ .
\end{equation}And the following recursive relations lead to a solution,
\begin{equation}
\begin{array}{cc}
g_{0,0}=0 \ ; & \\ & \\
g_{k,k}=\sqrt{-1}f_{k+1,k+1} \ ,  & k>0 \ ;  \\ & \\
g_{0,k}=\fraction{f_{0,k}+\sqrt{-1}g_{0,k-1}}{k} \ ,  & k>0 \ ;  \\ & \\
g_{k,0}=\fraction{-f_{k,0}-\sqrt{-1}g_{k-1,0}}{k} \ ,  & k>0 \ ;  \\ & \\
g_{k,l}=\fraction{f_{k,l}+\sqrt{-1}g_{k-1,l-1}}{l-k} \ ,  & k\neq l>0 \ .  \\ & \\
\end{array}
\end{equation}

We can even write a closed-form expression for the jets,
\begin{equation}
\begin{array}{lc}
g_{0,0}=0 \ ; & \\ & \\
g_{k,k}=\sqrt{-1}f_{k+1,k+1} \ ,  & k>0 \ ;  \\ & \\
g_{0,k}=\fraction{1}{k!}\Somatorio{j=0}{k-1}(-1)^{\frac{j}{2}}(k-j-1)!f_{0,k-j} \ ,  & k>0 \ ;  \\ & \\
g_{k,0}=\fraction{1}{k!}\Somatorio{j=0}{k-1}(-1)^{\frac{j}{2}+1}(k-j-1)!f_{k-j,0} \ ,  & k>0 \ ;  \\ & \\
g_{k,l}=\Somatorio{j=0}{k-1}\fraction{(-1)^{\frac{j}{2}}}{(l-k)^{j+1}}f_{k-j,l-j}& \\ & \\\qquad\qquad+\Somatorio{j=0}{l-k-1}\fraction{(-1)^{\frac{k}{2}+\frac{j}{2}}(l-k-j-1)!}{(l-k)^k(l-k)!}f_{0,l-k-j} \ ,  & l>k>0 \ ;  \\ & \\
g_{k,l}=\Somatorio{j=0}{l-1}\fraction{(-1)^{\frac{j}{2}}}{(l-k)^{j+1}}f_{k-j,l-j}& \\ & \\\qquad\qquad+\Somatorio{j=0}{k-l-1}\fraction{(-1)^{\frac{l}{2}+\frac{j}{2}+1}(k-l-j-1)!}{(l-k)^l(k-l)!}f_{k-l-j,0} \ ,  & k>l>0 \ .
\end{array}
\end{equation}

This procedure solves the equation only formally. According to Borel's theorem \cite{borel}, there exists, up to Taylor flat functions\footnote{Observe that two smooth functions which have the same Taylor expansion at a point differ by a smooth function which has vanishing jet at all order at that point.} at the origin, a unique smooth function with such Taylor series.

Thus, we have proved the following:
\begin{lem}\label{lek} Any smooth  function $\tilde{g}$ whose Taylor series is defined by the previous recursive relations satisfies,
\begin{equation}
X(\tilde{g})-ih\tilde{g}-\alpha(X)=F \ ,
\end{equation}where $F$ is a Taylor flat function at the origin.
\end{lem}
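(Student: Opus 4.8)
The plan is to turn the statement into a single identity between formal power series and then quote Borel's theorem. Write $L := X - ih$ for the first-order linear operator on the left-hand side. Since $L$ has polynomial coefficients it preserves smoothness and, crucially, it is compatible with the operation $T$ of taking the Taylor expansion at the origin, viewed as a $\CC$-algebra map into $\CC[[x,y]]$; that is, $T(F) = T(L\tilde g) - T(\alpha(X)) = L\bigl(T(\tilde g)\bigr) - T(\alpha(X))$. By the choice of $\tilde g$ we have $T(\tilde g) = \sum_{k,l} g_{k,l}\,x^k y^l$ and $T(\alpha(X)) = \sum_{k,l} f_{k,l}\,x^k y^l$, so the whole lemma reduces to checking that the formal series $L\bigl(\sum g_{k,l}x^k y^l\bigr) - \sum f_{k,l} x^k y^l$ vanishes identically. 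Once this is established, $F$ is a smooth function all of whose derivatives are zero at the origin, i.e. Taylor flat, which is exactly the assertion.

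First I would compute the action of $L$ on monomials: $X(x^k y^l) = (l-k)\,x^k y^l$ while $ih\,x^k y^l = i\,x^{k+1}y^{l+1}$. Hence the coefficient of $x^k y^l$ in $L\bigl(\sum g_{k,l}x^k y^l\bigr) - \sum f_{k,l}x^k y^l$ equals $(l-k)\,g_{k,l} - i\,g_{k-1,l-1} - f_{k,l}$, under the convention that $g_{a,b}=0$ as soon as $a<0$ or $b<0$. The vanishing of $T(F)$ is therefore equivalent to the family of scalar relations $(l-k)\,g_{k,l} = f_{k,l} + i\,g_{k-1,l-1}$, indexed by all $(k,l)$. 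Away from the diagonal (the region $k\neq l$) the scalar $l-k$ is invertible and the recursive relations are precisely this equation solved for $g_{k,l}$; along the two coordinate axes the same computation holds, the boundary convention on the $g_{a,b}$ supplying the correct values.

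The genuinely delicate step, and the one I expect to be the main obstacle, is the diagonal $k=l$, where $l-k$ vanishes. There the relation indexed by $(k,k)$ does not determine $g_{k,k}$ but collapses to the compatibility condition $0 = f_{k,k} + i\,g_{k-1,k-1}$, a constraint tying each diagonal coefficient to its predecessor. The point is that the prescription $g_{k,k} = i\,f_{k+1,k+1}$ is engineered exactly so that the condition one step higher is met: substituting it into the $(k+1,k+1)$ relation gives $f_{k+1,k+1} + i\,(i\,f_{k+1,k+1}) = 0$. The bottom of the diagonal is then anchored by $f_{0,0}=0$, which holds by Proposition \ref{welldefineform} since $\alpha(X)$ vanishes on $\Sigma=\{x=y=0\}$; this is what makes the $(0,0)$ relation trivially true. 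Verifying that these diagonal and axis conventions dovetail with the interior recursion, so that no obstruction survives at any order, is the real content of the computation.

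With the formal identity $T(F)=0$ in hand the proof concludes at once. Borel's theorem \cite{borel} guarantees a smooth $\tilde g$ realising the prescribed jet, and any other realisation differs from it by a flat function; since $F = L\tilde g - \alpha(X)$ has identically vanishing Taylor series at the origin, it is by definition Taylor flat there, which is the statement of the lemma.
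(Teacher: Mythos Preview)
Your approach is essentially the paper's own: expand both sides in Taylor series at the origin, check that the given recursive relations solve the resulting coefficient identities, and invoke Borel's theorem. The paper merely sets up the recursions and states the lemma; you spell out the coefficient-matching more carefully, which is a genuine improvement in exposition.

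There is, however, a small but real gap in your diagonal verification, and it is one you inherit from the paper's own prescription. You correctly observe that the $(k,k)$-relation collapses to the compatibility condition $0=f_{k,k}+i\,g_{k-1,k-1}$. You then check this at level $(0,0)$ (using $f_{0,0}=0$ from Proposition~\ref{welldefineform}) and at level $(k+1,k+1)$ for $k\geq 1$ (using $g_{k,k}=i\,f_{k+1,k+1}$). But the relation at level $(1,1)$ reads $0=f_{1,1}+i\,g_{0,0}$, and the paper's prescription sets $g_{0,0}=0$, which would force $f_{1,1}=0$. Nothing in the hypotheses guarantees this: Proposition~\ref{welldefineform} only gives vanishing of $\alpha(X)$ at the origin, not of its $xy$-coefficient. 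So with the recursions taken literally, the formal identity $T(F)=0$ fails at order $(1,1)$ for generic $\alpha$.

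The repair is immediate: simply extend the diagonal rule to $k=0$ by setting $g_{0,0}=i\,f_{1,1}$ rather than $0$; then the $(1,1)$ relation becomes $f_{1,1}+i(i\,f_{1,1})=0$ and the whole chain closes up uniformly. With that one-line correction your argument is complete and matches the paper's strategy exactly.
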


Therefore, if it is possible to find a solution for
\begin{equation}
X(G)-ihG=F \ ,
\end{equation}such that $G$ is Taylor flat at the origin, the difference $\tilde{g}-G$ defines a smooth solution for the cohomological equation \eqref{cohoequa}.

One can solve this problem with the aid of the logarithmic function $\ln\gamma:\{(x,y)\in\CR^2 \ ; \ xy\neq 0\}\to\CR$, where $\ln\gamma(p)$ is the time  that it takes for a point in the diagonal, $\{(x,y)\in\CR^2 \ ; \ x=y\}$, to reach $p$ via the flow of $X$ (the diagonal point and $p$ lie over the same integral curve of $X$). This function is well defined for $xy\neq 0$.

\begin{lem}\label{leklek} For a given Taylor flat function $F$, a solution to the equation $X(G)-ihG=F$ is given by,
\begin{equation}
G=\integral{-\ln\gamma}{0}{\e^{-iht}F\circ\phi_t}{t} \ .
\end{equation}This solution is well defined and smooth over all points of $\CR^2$.
\end{lem}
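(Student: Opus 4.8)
The plan is to separate the statement into two independent assertions: that $G$ solves $X(G)-ihG=F$, and that $G$ extends to a $C^{\infty}$ function across the singular set $\{xy=0\}$. The first is a formal computation with the flow; the second, where the flatness of $F$ is essential, is the real content.

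For the equation I would use only two structural facts about the flow $\phi_t$ of $X$: that $h$ is constant along it (since $X(h)=0$, $h$ being the Hamiltonian of $X$), and that the time function obeys $\ln\gamma(\phi_s(p))=\ln\gamma(p)+s$, i.e. $X(\ln\gamma)=1$. Writing $X(G)(p)=\frac{\ud}{\ud s}\big|_{s=0}G(\phi_s(p))$ and using $\phi_t\circ\phi_s=\phi_{t+s}$ together with the invariance of $h$, the change of variables $u=t+s$ turns $G(\phi_s(p))$ into $\e^{ihs}\int_{-\ln\gamma(p)}^{s}\e^{-ihu}F(\phi_u(p))\,\ud u$. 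Differentiating at $s=0$, the prefactor contributes $ihG(p)$ while the upper limit contributes the integrand evaluated at $u=0$, namely $F(p)$; hence $X(G)-ihG=F$ on $\{xy\neq0\}$, as claimed.

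For smoothness, note first that on $\{xy\neq0\}$ the function $\ln\gamma$ is smooth and the integration limits depend smoothly on the point, so $G$ is automatically smooth there; the only issue is the axes, where $\ln\gamma\to\pm\infty$ and the interval of integration becomes unbounded. I would reparametrise the integral along each leaf $\{xy=h\}$. In the first quadrant the substitution $\eta=y\e^{t}$ gives
\[
G(x,y)=\int_{\sqrt{h}}^{y}\left(\frac{\eta}{y}\right)^{-ih}F\!\left(\frac{h}{\eta},\eta\right)\frac{\ud\eta}{\eta},\qquad h=xy,
\]
which makes the dependence on $(x,y)$ explicit, keeps the prefactor of modulus one, and isolates the two dangerous features: the lower endpoint $\sqrt{h}$ and the factor $(\eta/y)^{-ih}$. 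Since $F$ is Taylor flat at the origin, $F(0,\eta)=O(\eta^{N})$ for every $N$, so the limiting integral $\int_{0}^{y}F(0,\eta)\,\ud\eta/\eta$ converges and provides the candidate boundary value on the axis.

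The main obstacle is to promote this convergence to a genuine $C^{\infty}$ extension. I would differentiate under the integral sign to all orders in $x$ and $y$: each derivative of $(\eta/y)^{-ih}$ produces factors $\ln(\eta/y)$ and powers of $h$, while differentiating the variable endpoint $\sqrt{h}$ produces boundary contributions built from $F(\sqrt{h},\sqrt{h})$ and its derivatives. Flatness of $F$ dominates all of these: the boundary terms are $O(h^{N})$ for every $N$, beating the negative powers of $x$ coming from $\partial_x\sqrt{h}$, and the logarithmic factors are harmless against the power decay of $F$. Thus every partial derivative extends continuously, in fact flatly, to the axes. The final and most delicate point is to verify that the four quadrant expressions glue to infinite order at the origin, so that $G$ is $C^{\infty}$ on all of $\CR^{2}$; this corner matching, carried out uniformly in the order of differentiation, is where I expect the bulk of the technical work to lie.
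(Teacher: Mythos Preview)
Your verification that $G$ solves $X(G)-ihG=F$ is essentially identical to the paper's: compose with the flow, use $h\circ\phi_s=h$ and $\ln\gamma\circ\phi_s=\ln\gamma+s$, substitute $u=t+s$, and differentiate at $s=0$.

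For the smoothness across $\{xy=0\}$ you take a genuinely different route. The paper keeps the time parametrisation throughout: it bounds $|G|\le |\ln\gamma|\cdot\max_{t\in[-\ln\gamma,0]}|F\circ\phi_t|$, observes that the endpoint $\phi_{-\ln\gamma}(p)$ is the diagonal point $(\sqrt{|h|},\sqrt{|h|})$, and invokes Taylor flatness of $F$ at the origin to make this product vanish in the limit; it then writes $\ud G$ via Leibniz as a sum of an integral term, $-iG\,\ud h$, and the boundary contribution $\e^{-ih\ln\gamma}F\circ\phi_{-\ln\gamma}\,\ud\ln\gamma$, and dispatches each with the same flatness estimate, remarking that higher derivatives follow by the same mechanism. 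Your approach instead reparametrises by the leaf coordinate $\eta=y\e^t$, converting the problem into a one-variable integral with variable endpoint $\sqrt{h}$ and unimodular oscillatory factor $(\eta/y)^{-ih}$, and then differentiates under the integral sign. What the paper buys is uniformity: one estimate, written once, covers all quadrants and all orders, with no gluing. What your approach buys is explicitness: you actually identify the boundary value on the axis as $\int_0^{y}F(0,\eta)\,\ud\eta/\eta$, and your estimates remain transparent at axis points away from the origin (where the paper's $\max$ bound is less informative, since the flow segment need not stay near $(0,0)$). The ``corner matching'' you anticipate is a genuine cost of your quadrant-by-quadrant decomposition; the paper sidesteps it by never leaving the original integral.
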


\begin{rema} Observe that the smoothness of this formula still holds if parameters are considered in the function $F$. This observation will be needed for the higher dimensional discussion.

\end{rema}
\begin{proof}Before proving that the expression for $G$ is smooth and well defined, let us prove that $G$ solves the equation by computing $X(G)$. We first consider the composition of $G$ with the flow of $X$ at time $s$,
\begin{equation}
G\circ\phi_s=\integral{-\ln\gamma\circ\phi_s}{0}{\e^{-ith\circ\phi_s}F\circ\phi_t\circ\phi_s}{t}=\integral{-\ln\gamma\circ\phi_s}{0}{\e^{-ith}F\circ\phi_{t+s}}{t} \ .
\end{equation}The logarithmic function satisfies $\ln\gamma\circ\phi_s=\ln\gamma + s$ and $h\circ\phi_s=h$, thus, by a change of coordinates $\tau=t+s$:
\begin{equation}
G\circ\phi_s=\integral{-\ln\gamma - s+s}{s}{\e^{-ih(\tau-s)}F\circ\phi_\tau}{\tau}=\e^{ish}\integral{-\ln\gamma}{s}{\e^{-ith}F\circ\phi_t}{t} \ .
\end{equation}

Then, differentiating $G\circ\phi_s$ with respect to $s$,
\begin{equation}
\dev{}{s}G\circ\phi_s=ih\e^{ish}\integral{-\ln\gamma}{s}{\e^{-ith}F\circ\phi_t}{t}+F\circ\phi_s \ ,
\end{equation}and finally evaluating it in $s=0$ one gets,
\begin{equation}
X(G)=\dev{}{s}G\circ\phi_s\bigg{|}_{s=0}=ih\integral{-\ln\gamma}{0}{\e^{-ith}F\circ\phi_t}{t}+F=ihG+F \ .
\end{equation}

It is clear that $G$ is smooth and well defined over the points where the logarithmic function $\ln\gamma$ is well defined (the set $\{(x,y)\in\CR^2 \ ; \ xy\neq 0\}$). The idea now is to prove that it is continuous and well defined at the points where $h=0$.

For each point of $\{(x,y)\in\CR^2 \ ; \ xy\neq 0\}$,
\begin{equation}
|G|\leq\integral{-\ln\gamma}{0}{\left|\e^{-iht}F\circ\phi_t\right|}{t}=\integral{-\ln\gamma}{0}{\left|F\circ\phi_t\right|}{t}\leq\left|\ln\gamma\right|\displaystyle\max_{t\in [-\ln\gamma,0]}|F\circ\phi_t| \ .
\end{equation}

When $h$ approaches zero, $\ln\gamma$ diverges in a logarithmic fashion. It is left to understand how $\displaystyle\max_{t\in [-\ln\gamma,0]}|F\circ\phi_t|$ behaves.

At a point $p=(x,y)\in\CR^2$, the flow of the Hamiltonian vector field $X=-x\depp{}{x}+y\depp{}{y}$ is given by $\phi_t(p)=(\e^{-t}x,\e^ty)$. Let $p_0=(z,z)$ be a point of $\CR^2$ satisfying $\phi_t(p_0)=p$, then
\begin{equation}
\ln\gamma (p)=\left\{\begin{array}{lc}\frac{1}{2}\ln\fraction{y}{x} & \text{if} \ xy>0 \\ \frac{1}{2}\ln\fraction{-y}{x} & \text{if} \ xy<0\end{array}\right. \ ,
\end{equation}since
\begin{equation}
\e^{-t}z=x \ \Rightarrow \ t=\ln\fraction{z}{x}
\end{equation}and
\begin{equation}
\e^tz=y \ \Rightarrow \ t=\ln\fraction{y}{z} \ .
\end{equation}

Therefore,
\begin{equation}
\phi_{-\ln\gamma(p)}(p)= (|h(p)|^{\frac{1}{2}},|h(p)|^{\frac{1}{2}}) \ ,
\end{equation}which implies $\limite{|h|}{0}F\circ\phi_{-\ln\gamma}=0$ and it goes sufficiently fast to zero to guarantee that $G$ is continuous and vanishes at $h=0$, because the function $F$ is Taylor flat at the origin.

One can see that $G$ is actually smooth at $h=0$ by analising its differential (it is clear that the argument that follows holds for the higher order partial derivatives):
\begin{equation}
\ud G=\integral{-\ln\gamma}{0}{\left(\e^{-iht}{\phi_t}^*\circ\ud F\right)}{t} -iG\ud h +\e^{-ih\ln\gamma}F\circ\phi_{-\ln\gamma}\ud\ln\gamma \ .
\end{equation}

The first term converges to zero, as $h$ approaches to zero, by the same argument used above, the partial derivatives of a Taylor flat function are still Taylor flat by definition. The second term is continuous and well defined at $h=0$ because $G$ is and $h$ is smooth. It remains to analise the term $F\circ\phi_{-\ln\gamma}\ud\ln\gamma$. By l'H\^{o}pital's rule, $\limite{h}{0}\e^{-ih\ln\gamma}=1$, and since  $F$ is Taylor flat the following equality holds \begin{equation}\limite{h}{0}F\circ\phi_{-\ln\gamma}\ud\ln\gamma=0.\end{equation}
\end{proof}

Since the dimension of the generic leaves is 1, the only cohomology group to be checked is the first cohomology group.

 Lemmata \ref{lek} and \ref{leklek} yield the following,

\begin{teo}
The first cohomology group $H^1(S_P^{\ \bullet}(L))$ vanishes when the polarisation is given by an integrable
system on a two-dimensional manifold in a neighbourhood of a hyperbolic singularity.
\end{teo}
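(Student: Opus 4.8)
The plan is to reduce the vanishing of $H^1(S_P^{\ \bullet}(L))$ to the solvability of a single cohomological equation and then to assemble the solution from the two preceding lemmata. First I would observe that because the polarisation $\mathcal{P}=\langle X\rangle$ has rank $1$, the module $\Omega_P^2(\CR^2)=\mathrm{Hom}_{C^\infty}(\wedge^2 P;C^\infty)$ is trivial and hence $\mathcal{S}_P^2(L)=0$; consequently every element $\alpha\otimes\e^{ih}\in S_P^1(L)$ is automatically $\ud_\nabla$-closed, and $H^1(S_P^{\ \bullet}(L))$ is simply the quotient of $S_P^1(L)$ by the image of $\nabla$. Showing that this group vanishes therefore amounts to proving that every $\alpha\otimes\e^{ih}$ is exact, i.e.\ that equation \eqref{cohoequa}, namely $X(g)=ihg+\alpha(X)$, admits a smooth solution $g$ for each polarised $1$-form $\alpha$.

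To produce such a $g$, I would split the problem into a formal part and a flat part. By Lemma \ref{lek}, the recursively defined Taylor series furnishes (via Borel's theorem) a smooth function $\tilde{g}$ that solves the equation modulo a remainder which is Taylor flat at the origin, $X(\tilde{g})-ih\tilde{g}-\alpha(X)=F$ with $F$ flat. It then remains to dispose of $F$, which is exactly what Lemma \ref{leklek} provides: for the flat function $F$ the integral $G=\integral{-\ln\gamma}{0}{\e^{-iht}F\circ\phi_t}{t}$ yields a globally smooth solution of $X(G)-ihG=F$.

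The conclusion is then immediate from the linearity of the operator $g\mapsto X(g)-ihg$: setting $g=\tilde{g}-G$ one computes
\begin{equation}
X(g)-ihg=\big(X(\tilde{g})-ih\tilde{g}\big)-\big(X(G)-ihG\big)=\big(\alpha(X)+F\big)-F=\alpha(X),
\end{equation}
so $g$ solves \eqref{cohoequa} and $\alpha\otimes\e^{ih}=\nabla(g\e^{ih})$ is exact. Since $\alpha$ was arbitrary and every $1$-form is closed, $H^1(S_P^{\ \bullet}(L))=0$.

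The genuine difficulty of this argument is concentrated in the two lemmata rather than in their combination. In Lemma \ref{lek} one must verify that the recursion is consistent across all the regimes $k=l$, $k<l$, $k>l$ and along the coordinate axes, so that a well-defined formal series (and hence a smooth $\tilde{g}$) really exists. The sharpest point, however, is the smoothness of $G$ at the singular set $\{h=0\}$ in Lemma \ref{leklek}: there the integration time $\ln\gamma$ diverges logarithmically, and one must show that the flatness of $F$ forces $F\circ\phi_{-\ln\gamma}$ to decay fast enough that $G$ and all its derivatives extend continuously and vanish across $h=0$. Controlling this competition between the logarithmic blow-up of $\ln\gamma$ and the flat decay of $F$ along the flow of $X$ is the heart of the matter; once it is in hand, the theorem follows as above.
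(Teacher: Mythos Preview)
Your proposal is correct and follows essentially the same approach as the paper: the authors likewise observe that, since the generic leaf dimension is $1$, only $H^1$ needs to be checked, and they combine Lemmata~\ref{lek} and~\ref{leklek} exactly as you do, noting that the difference $\tilde{g}-G$ solves the cohomological equation~\eqref{cohoequa}. Your discussion makes the triviality of $S_P^2(L)$ and the superposition step $g=\tilde{g}-G$ slightly more explicit than the paper does, but the argument is the same.
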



\section{Higher dimensions}

In this section we outline how to prove the Poincar\'{e} lemma for dimension greater than $2$. The idea is to use the  symplectic local model of integrable systems guaranteed by theorem \ref{thm:rank0uni}, that allows to do computations which entail the reduction to the 2-dimensional case (or 4-dimensional when there are focus-focus singularities).

A first strategy to prove the higher-dimensional case would be to adopt an algebraic point of view and investigate a K\"{u}nneth formula for these sheaves in the singular case. This is the point of view adopted in
 \cite{MiPe} for regular polarisations satisfying some finite-dimensional properties for the sheaf cohomology.

More concretely in \cite{MiPe} the first author of this article together with Francisco Presas proved,

\begin{teo}[Miranda and Presas]\label{thm:Kun}
There is an isomorphism,
\begin{equation}
 H^n(M_1 \times M_2; \mathcal{J}_{12}) \backsimeq \bigoplus_{p+q=n}  H^p(M_1; \mathcal{J}_1) \otimes  H^q(M_2; \mathcal{J}_2) \ ,
\end{equation}whenever the Geometric Quantisation associated to $(M_1,\mathcal{J}_1)$ has finite dimension, $M_1$ is compact  and $M_2$ admits a \emph{good} covering.

\end{teo}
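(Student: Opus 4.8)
The plan is to show that a natural Künneth map is an isomorphism, arguing by induction on a good cover of $M_2$ in the spirit of the de Rham Künneth theorem of Bott and Tu, with the Kostant complex (Theorem \ref{fineresolution} together with the Poincar\'{e} lemmata of Sections 5 and 6) in the role of the de Rham complex. The engine of the argument is the hypothesis that $H^\bullet(M_1;\mathcal{J}_1)$ is finite-dimensional: tensoring a long exact sequence with a fixed finite-dimensional graded vector space is an exact operation, and this is precisely what lets Mayer--Vietoris and the five lemma propagate the isomorphism across the cover.

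First I would fix the geometry on the product. Equipping $M_1\times M_2$ with $\omega_1\oplus\omega_2$, prequantum bundle $L_1\boxtimes L_2$ and product polarisation $P_1\oplus P_2$, a section is flat along $P_1\oplus P_2$ if and only if it is flat along each factor; hence $\mathcal{J}_{12}\cong\mathcal{J}_1\boxtimes\mathcal{J}_2$, and the Kostant complex of $M_1\times M_2$ is the total complex of the external tensor product ${S_{P_1}}^\bullet(L_1)\boxtimes{S_{P_2}}^\bullet(L_2)$, with differential $\ud_{\nabla_1}\otimes 1\pm 1\otimes\ud_{\nabla_2}$. Taking external wedge products of representatives then produces, for every open $U\subseteq M_2$, a map
\[
\kappa_U:\ \bigoplus_{p+q=n}H^p(M_1;\mathcal{J}_1)\otimes H^q(U;\mathcal{J}_2)\ \longrightarrow\ H^n(M_1\times U;\mathcal{J}_{12}) \ ,
\]
natural in $U$ and compatible with the Mayer--Vietoris connecting homomorphisms on both sides.

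The induction runs over a good cover of $M_2$, understood as a cover by opens on which the Poincar\'{e} lemma for $\mathcal{J}_2$ holds, so that each patch is $\mathcal{J}_2$-acyclic. For the base case, when $U$ is such a patch, the Poincar\'{e} lemma of Sections 5 and 6, applied in the $M_2$-direction and relying on the local normal form of Theorem \ref{thm:rank0uni}, shows that the $M_2$-factor of the Kostant complex over $M_1\times U$ is acyclic in positive degree; this collapses the $U$-contribution to the flat sections $\mathcal{J}_2(U)$ and identifies $H^\bullet(M_1\times U;\mathcal{J}_{12})$ with $H^\bullet(M_1;\mathcal{J}_1)\otimes\mathcal{J}_2(U)$, so that $\kappa_U$ is an isomorphism. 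For the inductive step, one writes an open set covered by $m$ patches as the union of a patch and an open covered by $m-1$ patches; both the domain and the codomain of $\kappa$ then sit in Mayer--Vietoris sequences, which are compatible because $-\otimes H^\bullet(M_1;\mathcal{J}_1)$ is exact, and the five lemma upgrades the isomorphism from the pieces to the union. Compactness of $M_1$ and the good cover of $M_2$ make the covers finite, so the induction terminates at $U=M_2$.

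I expect the base case to be the main obstacle, namely the relative Poincar\'{e} lemma showing $\kappa_U$ to be an isomorphism over an elementary $U$. Over singular, and in particular hyperbolic, leaves the homotopy operators of Section 6 are defined only modulo Taylor-flat terms and are built from integrations such as the one in Lemma \ref{leklek}, whose smoothness and well-definedness must now be controlled uniformly in the $M_1$-variable; this is exactly the situation anticipated by the remark on parameter dependence following Lemma \ref{leklek}. Making this relative statement precise, and verifying that it leaves no residual completion or $\mathrm{Tor}$ contributions, is the technical heart of the proof; the finite-dimensionality of $H^\bullet(M_1;\mathcal{J}_1)$ is what guarantees that the ordinary tensor product suffices and that no such contributions survive.
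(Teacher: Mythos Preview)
The paper does not actually prove this theorem: Theorem~\ref{thm:Kun} is quoted from \cite{MiPe} and attributed to Miranda and Presas. The present article merely cites it, explicitly in the context of \emph{regular} polarisations, as motivation for the higher-dimensional discussion, before immediately stating that ``we prefer to avoid this approach here and we rather provide a direct proof''. There is therefore no proof in this paper against which to compare your proposal.

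As for your argument itself, the Bott--Tu style Mayer--Vietoris induction on a good cover is the natural strategy and presumably close in spirit to what \cite{MiPe} does. Two points, however, are off. First, invoking ``the Poincar\'{e} lemmata of Sections~5 and~6'' for the base case is a category error: those sections treat \emph{singular} (elliptic, focus-focus, hyperbolic) polarisations and constitute the new content of the present paper, whereas Theorem~\ref{thm:Kun} as formulated in \cite{MiPe} concerns regular polarisations, for which the ordinary foliated Poincar\'{e} lemma underlying Lemma~\ref{locflat} and Theorem~\ref{fineresolution} already supplies the local acyclicity you need. Second, your claim that ``compactness of $M_1$ and the good cover of $M_2$ make the covers finite, so the induction terminates'' is unjustified: only $M_1$ is assumed compact, not $M_2$, and a good cover of a noncompact manifold need not be finite. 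One must either restrict to finite good covers or supplement the induction with a direct-limit argument, as in Bott and Tu; the finite-dimensionality hypothesis on $H^\bullet(M_1;\mathcal{J}_1)$ does not by itself resolve this.
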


When one allows singularities into the picture, proving such a formula equivalent becomes a tricky question since
these topological spaces are sometimes of infinite dimension (like
in the hyperbolic case \cite{HaMi}).

There is a more general K\"{u}nneth theory for sheaves that can be handled in some cases to give a direct proof of some interesting facts of  sheaf cohomology. K\"{u}nneth formula for sheaves has been studied by many authors (see for instance \cite{kaup}), and the notion of \lq\lq completion" of the
 topological tensor product is used (this probably dates back to  Grothendieck's thesis \cite{grothendieck}).

 In order to consider a completion of the cohomology group, a topology needs to be induced in the cohomology groups with coefficients in the sheaf of flat sections.
In the regular case this topology is quite intuitive. For foliated cohomology we consider the de Rham complex and this yields
an induced topological structure using the topology of the space of
forms and the fact that for differential forms the exterior derivative $\ud$ is continuous. In the case of general Fr\'{e}chet sheaves a similar strategy can be used (\cite{grothendieck} and \cite{bertelsonkunneth}).

 One could  try to adopt this approach to prove Poincar\'{e} lemma for the Kostant complex from the two-dimensional case\footnote{This approach may seem, a priori, quite na\"{\i}ve since for de Rham cohomology the proof of a K\"{u}nneth
 formula for compact manifolds precisely uses as a first step for its proof the local case in which the K\"{u}nneth formula prevails
 precisely because of Poincar\'{e} lemma (see \cite{BT82}). In this section we are just using this formula to motivate the construction
  that will follow, even if
 this type formula combined with a Mayer-Vietoris-like argument is extremely useful to compute
 geometric quantisation of actual compact manifolds as it was seen in \cite{MiPe}.}(in case there are no focus-focus fibres) or from the combination of $2$ and $4$-dimensional cases. We prefer to avoid this approach here and we rather provide a direct proof of this fact.

We start by considering the  pure hyperbolic case (Williamson type $(0, n,0)$) and then we sketch how to prove the general case by combining the case of several hyperbolic components with known results for toric and semitoric components.


\subsection{The case of Williamson type $(0, n,0)$}

Let $(M_1\times\cdots\times M_n=\CR^{2n},\omega=\somatorio{j=1}{n}\ud x_j\wedge\ud y_j)$ and $H=(h_1,\dots,h_n):M_1\times\cdots\times M_n\to\CR$ be a nondegenerate integrable system of Williamson type $(0,k_h=n,0)$, i.e.: $h_j(x_1,y_1,\dots,x_n,y_n)=x_jy_j$. In this case, the real polarisation is $\mathcal{P}=\langle X_1,\dots,X_n\rangle$, with $X_j$ the Hamiltonian vector field $-x_j\depp{}{x_j}+y_j\depp{}{y_j}$.

The product manifold $(M_1\times\cdots\times M_n,\omega)$ is an exact symplectic manifold and the trivial line bundle is a prequantum line bundle for it: $L=\CC\times\CR^{2n}$ with connection 1-form $\Theta=\frac{1}{2}\somatorio{j=1}{n}x_j\ud y_j-y_j\ud x_j$ with respect to the unitary section $s=\exp\left(i\somatorio{j=1}{n}h_j\right)$.

In order to have a Poincar\'{e} lemma, one needs to prove that $H^k(S^{\ \bullet}_P(L))$ are trivial for all\footnote{The cohomology group $H^0(S^{\ \bullet}_P(L))$ can also be computed by a parametric version of proposition \ref{fourquad}. Since the aim of this article is to provide a Poincar\'{e} Lemma, this simple computation is left aside.} $k\geq 1$. Let us start with $H^n(S^{\ \bullet}_P(L))$ which is the easiest case.

\begin{prop}\label{prop:hn} The top cohomology group
$H^n(S^{\ \bullet}_P(L))$ vanishes when the Willia\-mson type of the singularities is $(0, n,0)$ .
\end{prop}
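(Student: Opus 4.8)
The plan is to reduce the top-degree cohomology to the one-dimensional hyperbolic case treated in the previous section, exploiting the product structure of the system. In Williamson type $(0,n,0)$ the polarisation is $\mathcal{P}=\langle X_1,\dots,X_n\rangle$ with $X_j=-x_j\partial_{x_j}+y_j\partial_{y_j}$, and an element of $S_P^n(L)$ is of the form $\alpha\otimes s$ where $\alpha$ is determined by the single coefficient $f=\alpha(X_1,\dots,X_n)$ and $s=\exp\bigl(i\sum_j h_j\bigr)$. Writing out $\ud_\nabla$ on degree $n$, the top group $H^n(S_P^{\ \bullet}(L))$ vanishes precisely when, for every smooth $f$, one can solve the single scalar equation
\begin{equation}
\sum_{j=1}^n X_j(g) - i\Bigl(\sum_{j=1}^n h_j\Bigr) g = f
\end{equation}
for some smooth $g$ (so that $\ud_\nabla(g'\otimes s)=\alpha\otimes s$ for a suitable $(n-1)$-form $g'$ built from $g$). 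Thus the first step is to carry out this algebraic bookkeeping carefully, verifying that solving one PDE in the combined vector field $\mathcal{X}:=\sum_j X_j$ and combined Hamiltonian $\mathcal{H}:=\sum_j h_j$ suffices for top-degree exactness.

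Once the problem is reduced to the scalar equation $\mathcal{X}(g)-i\mathcal{H}g=f$, I would mimic the two-step strategy of Section 6. First, solve it \emph{formally}: the Taylor coefficients of $g$ are obtained from those of $f$ by a recursion built from the operator $\mathcal{X}$, whose action on a monomial $x^\alpha y^\beta$ is multiplication by $\sum_j(\beta_j-\alpha_j)$, together with the shift coming from multiplication by $i\mathcal{H}=i\sum_j x_jy_j$. The key algebraic point is that this recursion is solvable: the homogeneous obstruction occurs only on the resonant monomials where $\sum_j(\beta_j-\alpha_j)=0$, and there the multiplication-by-$\mathcal{H}$ term lets one raise the total degree and invert, exactly as in the $g_{k,k}=\sqrt{-1}f_{k+1,k+1}$ relation of the planar case. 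By Borel's theorem this produces a smooth $\tilde g$ with $\mathcal{X}(\tilde g)-i\mathcal{H}\tilde g-f=F$ for a Taylor-flat remainder $F$, which is the $n$-dimensional analogue of Lemma \ref{lek}.

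The second step is to kill the flat remainder with an integral formula, reusing Lemma \ref{leklek}. Here the flow of the combined field is $\phi_t(x,y)=(e^{-t}x_1,e^t y_1,\dots,e^{-t}x_n,e^t y_n)$ acting diagonally, and one sets
\begin{equation}
G=\int_{-\ln\gamma}^{0}\e^{-i\mathcal{H}t}\,F\circ\phi_t\;\ud t,
\end{equation}
where $\ln\gamma$ is again the time to flow back from a suitable transversal. The computation $\mathcal{X}(G)=i\mathcal{H}G+F$ is identical in form to the planar one, since $\mathcal{H}\circ\phi_s=\mathcal{H}$ and $\ln\gamma\circ\phi_s=\ln\gamma+s$ continue to hold. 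I would invoke the remark following Lemma \ref{leklek}, which already notes that the smoothness of that formula persists when $F$ carries parameters, to handle the extra variables painlessly.

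The main obstacle I anticipate is \textbf{not} the formal solution but the convergence and smoothness of $G$ across the singular set, which is now the union $\bigcup_j \Sigma_j$ rather than a single coordinate cross. The domain where $\ln\gamma$ is well defined is $\{\mathcal{H}\neq 0\}$ only away from resonances, and more delicately the bound $|G|\le|\ln\gamma|\max_t|F\circ\phi_t|$ must be shown to vanish as one approaches \emph{each} hyperbolic coordinate plane, where different factors $x_j y_j$ degenerate at different rates. The crux is that Taylor-flatness of $F$ at the origin forces rapid decay of $F\circ\phi_{-\ln\gamma}$ faster than the logarithmic blow-up of $\ln\gamma$, and one must verify this decay uniformly in the transverse directions and for all higher-order partial derivatives via the differential-under-the-integral argument of the previous section. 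I expect this uniform-decay estimate near the higher-codimension stratum where several $h_j$ vanish simultaneously to be the genuinely delicate part, whereas the algebra of the recursion goes through mechanically.
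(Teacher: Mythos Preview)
Your reduction is different from the paper's, and it does not work. The paper exploits the freedom in the primitive: since the $(n-1)$-form $\beta$ has $n$ independent components $b_j=\beta(X_1,\dots,\hat X_j,\dots,X_n)$, one simply sets $\imath_{X_1}\beta=0$ (i.e.\ $b_2=\cdots=b_n=0$), and the exactness equation collapses to
\[
X_1(b_1)-ih_1\,b_1=\alpha(X_1,\dots,X_n),
\]
which is nothing but the parametric version of the two-dimensional cohomological equation in the pair $(x_1,y_1)$, with the remaining coordinates as parameters. Lemmata~\ref{lek} and~\ref{leklek} then apply directly (this is exactly what the remark after Lemma~\ref{leklek} anticipates). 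That is the entire argument.

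By forcing all components of $\beta$ to come from a single function $g$, you are instead trying to solve $\mathcal{X}(g)-i\mathcal{H}\,g=f$, and this equation is genuinely obstructed even at the formal level. Take $n=2$ and $f=h_1h_2=x_1y_1x_2y_2$ (which satisfies the vanishing constraint of Proposition~\ref{welldefineform}). The Taylor recursion at the resonant multi-indices $(2,0,2,0)$, $(0,2,0,2)$ and $(1,1,1,1)$ forces respectively $g_{1,0,1,0}=0$, $g_{0,1,0,1}=0$ and $g_{1,0,1,0}+g_{0,1,0,1}=i$, a contradiction. The planar relation $g_{k,k}=if_{k+1,k+1}$ does not generalise: the resonant lattice $\{\sum_j(\beta_j-\alpha_j)=0\}$ is now large and the shift by $\mathcal{H}$ couples $n$ lower-order terms at once, so the linear system on resonant coefficients is overdetermined. (By contrast, the same $f$ is handled immediately by the paper's choice: $b_1=ih_2$ solves $X_1(b_1)-ih_1b_1=h_1h_2$.) Your flat-remainder step would run into a separate difficulty---near a stratum where one $h_j$ vanishes but the others do not, the endpoint $\phi_{-\ln\gamma}(p)$ of the diagonal flow stays away from the origin, so Taylor-flatness of $F$ at $0$ gives no decay---but the formal obstruction already kills the plan.
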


\begin{proof}
Any line bundle valued polarised $n$-form, $\alpha\otimes s$, is automatically closed in dimension $2n$, and it is exact if and only if there exists a $\beta\in\Omega_P^{n-1}(M)$ such that,
\begin{equation}
\ud^\nabla(\beta\otimes s)=\alpha\otimes s \ .
\end{equation}Because $\nabla s=-i\Theta\otimes s$, the exactness of $\alpha\otimes s$ is equivalent to  finding  a solution to the equation: $\alpha=\ud_P\beta-(-1)^{n-1}i\beta\wedge\Theta$, or
\begin{eqnarray}
\alpha(X_1,\dots,X_n)&=&\Somatorio{j=1}{n}(-1)^{j+1}X_j(\beta(X_1,\dots,\hat{X_j},\dots,X_n)) \nonumber \\
&& \ \ -i\Somatorio{j=1}{n}(-1)^{j+1}h_j\beta(X_1,\dots,\hat{X_j},\dots,X_n) \ .
\end{eqnarray}

One can find a solution for this equation considering  a form $\beta$ satisfying $\imath_{X_1}\beta=0$ and solving the equations for $\beta(X_2,\dots,X_n)$, using the parametric versions of lemmata \ref{lek} and \ref{leklek}. This ends the proof of the proposition.
\end{proof}

In order to compute $H^1(S^{\ \bullet}_P(L))$, the parametric versions of lemmata \ref{lek} and \ref{leklek} are needed  in order to prove that some special properties satisfied by the data functions are preserved by the solution functions .
 More concretely,

\begin{lem}\label{lem:lemman} There exists a smooth solution $\tilde{g}\in C^{\infty}(\CR^{2n})$ of the equation
\begin{equation}
X_m(\tilde{g})-ih_m\tilde{g}=f \ ,
\end{equation}for a fixed $m$, satisfying  $X_l(\tilde{g})=ih_l\tilde{g}$ for all $l\neq m$ if and only if the  function $f\in C^{\infty}(\CR^{2n})$ fulfills the equality $X_l(f)=ih_lf$ for all $l\neq m$.
\end{lem}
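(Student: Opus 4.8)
The statement is an equivalence, and I would treat the two implications separately. Throughout write $P_m:=X_m-ih_m$ and $D_l:=X_l-ih_l$ for $l\neq m$, and recall the involutivity identities available from Section \ref{prequantum}: since the components of the moment map are in involution, $[X_i,X_j]=X_{\{h_i,h_j\}}=0$ and $X_i(h_j)=\{h_i,h_j\}=0$ for all $i,j$.

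The forward implication (that flatness of $\tilde g$ in the passive directions forces it on $f$) is a direct computation which I would carry out first, as it needs no analysis. Assuming $P_m\tilde g=f$ and $X_l(\tilde g)=ih_l\tilde g$ for $l\neq m$, one computes
\begin{align*}
X_l(f) &= X_lX_m(\tilde g)-iX_l(h_m\tilde g)\\
&= X_mX_l(\tilde g)-ih_m\,X_l(\tilde g)\\
&= (X_m-ih_m)(ih_l\tilde g)\\
&= ih_l\,(X_m-ih_m)(\tilde g)=ih_l f,
\end{align*}
where the second line uses $[X_l,X_m]=0$ and $X_l(h_m)=0$, the third uses the hypothesis $X_l(\tilde g)=ih_l\tilde g$, and the last uses $X_m(h_l)=0$. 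Hence $D_l f=0$, which is necessity.

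For the converse I would build the solution by the parametric versions of Lemma \ref{lek} and Lemma \ref{leklek}, taking $(x_m,y_m)$ as active variables and the remaining pairs $(x_l,y_l)$ as parameters. The organising fact is the commutation relation $[P_m,D_l]=0$ for $l\neq m$, which follows from the identities above together with the invariance of $h_l$ under the flow $\phi^m_t$ of $X_m$. First, the parametric Lemma \ref{lek} produces a smooth $\tilde g_0$ with $P_m\tilde g_0-f=:F$ Taylor flat along $\Sigma_m=\{x_m=y_m=0\}$; because the jet recursion has constant coefficients acting only on the $(x_m,y_m)$-indices and the hypothesis $D_lf=0$ annihilates every $(x_m,y_m)$-Taylor coefficient of $f$, the same recursion yields $D_l(g_{k,l'})=0$ for all coefficients, so $D_l\tilde g_0$ is Taylor flat along $\Sigma_m$ and hence $D_lF=P_m(D_l\tilde g_0)$ is flat as well. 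The decisive step is that the integral operator of Lemma \ref{leklek} commutes with each $D_l$: since $X_l$, $h_l$ and $\e^{-ih_mt}$ are invariant under, or constant along, $\phi^m_t$ (so that $X_l(F\circ\phi^m_t)=(X_lF)\circ\phi^m_t$, $h_l(F\circ\phi^m_t)=(h_lF)\circ\phi^m_t$, and $X_l(\ln\gamma_m)=X_l(h_m)=0$), differentiating under the integral sign gives
\begin{equation*}
D_l\!\left(\int_{-\ln\gamma_m}^{0}\e^{-ih_mt}\,F\circ\phi^m_t\,\ud t\right)=\int_{-\ln\gamma_m}^{0}\e^{-ih_mt}\,(D_lF)\circ\phi^m_t\,\ud t .
\end{equation*}
Thus the flat solution $G$ of Lemma \ref{leklek} satisfies $D_lG=0$ whenever $D_lF=0$, and $\tilde g:=\tilde g_0-G$ then solves $P_m\tilde g=f$ with $D_l\tilde g=0$.

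The hard part is exactly the gap between ``$D_lF$ flat along $\Sigma_m$'' and ``$D_lF=0$'': the Borel realisation of $\tilde g_0$ is canonical only modulo functions flat along $\Sigma_m$, and for such a $\psi$ the identity $\int_{-\ln\gamma_m}^{0}\e^{-ih_mt}(P_m\psi)\circ\phi^m_t\,\ud t=\psi-\e^{ih_m\ln\gamma_m}\,\psi\circ\phi^m_{-\ln\gamma_m}$ shows a boundary term — an element of $\ker P_m$ — can survive, so $D_l\tilde g$ need not vanish identically. To close this I would perform the Borel summation inside the joint kernel of the $D_l$, $l\neq m$: by Proposition \ref{fourquad} applied in each passive direction, both $f$ and every jet coefficient produced by the recursion has the structured form of a flat section times $\prod_{l\neq m}\e^{\frac{i}{2}x_ly_l\ln|x_l/y_l|}$, with the remaining factor depending on the passive directions only through the $h_l$ and flat in each $h_l$. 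Solving the genuinely two-dimensional equation $P_m\Psi=\Phi$ in $(x_m,y_m)$, with the $h_l$ as flat parameters, by the two-dimensional lemmata and reassembling produces a $\tilde g$ of the same structured form, for which $D_l\tilde g=0$ holds exactly rather than merely to infinite order along $\Sigma_m$. Checking that this reassembly is smooth across the singular sets $\Sigma_l$ — which is where flatness in $h_l$ enters, through the converse of Proposition \ref{fourquad} — is the technical heart of the argument.
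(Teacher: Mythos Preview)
Your forward implication is exactly the computation the paper has in mind when it calls the condition a ``compatibility condition,'' and your converse follows the same plan as the paper: solve the equation in the $(x_m,y_m)$ variables via the parametric versions of Lemmata~\ref{lek} and~\ref{leklek}, treating the remaining coordinates as parameters, and check that the passive constraints $D_l\tilde g=0$ are inherited from $D_lf=0$.

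Where you diverge is in how seriously you take the Borel step. The paper simply asserts that ``the expressions involved in the construction of $\tilde g$ \dots\ treat variables other than $x_m$ and $y_m$ as parameters, and it is clear $\tilde g$ has the same properties as $f$ concerning the other variables.'' You correctly notice that an arbitrary Borel realisation of the formal series need only satisfy $D_l\tilde g_0=0$ to infinite order along $\Sigma_m$, and you then propose to cure this by working inside the explicit description of $\bigcap_{l\neq m}\ker D_l$ furnished by Proposition~\ref{fourquad}, solving a genuinely two-dimensional problem with the $h_l$ as flat parameters, and reassembling across the $4^{n-1}$ passive multi-quadrants. That route is sound, but it is heavier than necessary: the standard explicit Borel summation (a cutoff sum $\sum g_{k,l}(z)\,x_m^ky_m^l\,\chi_{k,l}(x_m,y_m)$ with cutoffs depending only on $(x_m,y_m)$) is manifestly $C^\infty(\CR^{2n-2})$-linear in the passive variables and commutes with each $X_l$ and with multiplication by $h_l$. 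Since you have already checked that every coefficient $g_{k,l}(z)$ lies in $\ker D_l$, this particular realisation satisfies $D_l\tilde g_0=0$ exactly, the ``gap'' you flag never opens, and your boundary-term analysis and the detour through Proposition~\ref{fourquad} become unnecessary. That single observation is what the paper's ``it is clear'' is hiding, and once made it collapses your last paragraph to a line.
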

\begin{proof}
The  derivatives in the equation $X_m(\tilde{g})-ih_m\tilde{g}=f$ only involve coordinates $x_m$ and $y_m$, and the remaining variables can be thought as parameters; thus, one can use lemmata \ref{lek} and \ref{leklek} to find a solution $\tilde{g}$.

If the solution $\tilde{g}$ satisfies $X_l(\tilde{g})=ih_l\tilde{g}$ (for $l\neq m$), then $f$ must satisfy $X_l(f)=ih_lf$ ---this is a necessary condition associated to the compatibility condition between the equations $X_m(\tilde{g})-ih_m\tilde{g}=f$ and $X_l(\tilde{g})=ih_l\tilde{g}$.

What remains to be proven is that this compatibility condition is also sufficient. Indeed, the expressions involved in the construction of $\tilde{g}$ (lemmata \ref{lek} and \ref{leklek}) also treat variables other than $x_m$ and $y_m$ as parameters, and it is clear $\tilde{g}$  has the same properties as   $f$ concerning the other variables: in particular, each equation $X_l(f)=ih_lf$ does not involve coordinates $x_m$ and $y_m$ and thus the equation is fulfilled by $\tilde{g}$.
\end{proof}

\begin{prop}\label{prop:h1n} The first cohomology group
$H^1(S^{\ \bullet}_P(L))$ vanishes when the Willi\-amson type of the singularity is $(0, n,0)$.
\end{prop}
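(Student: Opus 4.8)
The plan is to reduce the computation of $H^1(S_P^{\ \bullet}(L))$ for the pure hyperbolic case in dimension $2n$ to $n$ successive applications of the two-dimensional result, organised along the product structure $M_1\times\cdots\times M_n$. A closed $1$-form is an element $\alpha\otimes s\in S_P^1(L)$ with $\ud^\nabla(\alpha\otimes s)=0$; writing $f_j=\alpha(X_j)$, the closedness condition translates (using $\nabla s=-i\Theta\otimes s$ and the commutativity $[X_i,X_j]=0$, together with the curvature formula) into a coupled system of equations relating $X_i(f_j)-X_j(f_i)$ and the $h_i f_j$ terms. The goal is to produce $g\in C^\infty(\CR^{2n})$ with $\nabla g\, s=\alpha\otimes s$, i.e.\ $X_j(g)-ih_jg=f_j$ for all $j$ simultaneously.

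First I would solve the equation for the index $m=1$ alone: by Lemma \ref{lem:lemman}, treating $x_2,y_2,\dots,x_n,y_n$ as parameters and applying the parametric versions of Lemmata \ref{lek} and \ref{leklek}, there is a smooth $\tilde g_1$ with $X_1(\tilde g_1)-ih_1\tilde g_1=f_1$. The crucial point supplied by Lemma \ref{lem:lemman} is that the auxiliary equations $X_l(\tilde g_1)=ih_l\tilde g_1$ for $l\neq 1$ are inherited by $\tilde g_1$ precisely when $f_1$ satisfies them; and the closedness of $\alpha$ is exactly what forces the required compatibility among the $f_j$. Replacing $\alpha$ by $\alpha-\nabla\tilde g_1$ (which is again closed and cohomologous to $\alpha$) I would reduce to the situation where the first component of the primitive vanishes, i.e.\ $\imath_{X_1}(\alpha-\nabla\tilde g_1)=0$. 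Iterating this over $m=1,2,\dots,n$, at each stage solving one more of the equations while Lemma \ref{lem:lemman} guarantees that the already-solved equations $X_l(g)=ih_lg$ persist, I would assemble a global primitive $g$ solving all $n$ equations at once, proving exactness.

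The main obstacle is verifying that the compatibility conditions arising from closedness of $\alpha$ are exactly the hypotheses $X_l(f)=ih_lf$ needed to invoke Lemma \ref{lem:lemman} at each step, and that these conditions propagate correctly after each subtraction $\alpha\mapsto\alpha-\nabla\tilde g_m$. Concretely, one must check that after solving the $m$-th equation, the modified data functions for the remaining indices still satisfy the coupled relations dictated by the (unchanged) closedness of the cohomologous form, so that the inductive hypothesis for Lemma \ref{lem:lemman} continues to hold. This is a bookkeeping argument relying on the fact that the $X_j$ mutually commute and each equation $X_l(\cdot)=ih_l(\cdot)$ involves only the coordinates $x_l,y_l$, so solving in the $(x_m,y_m)$-plane does not disturb the structure in the other planes—precisely the parametric independence exploited in the proof of Lemma \ref{lem:lemman}.

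A secondary point to address is smoothness at the singular loci $\Sigma_j=\{x_j=y_j=0\}$: the primitive produced at each stage involves the function $G$ of Lemma \ref{leklek}, whose well-definedness and smoothness were established there for Taylor-flat data, with the parametric remark ensuring smoothness is preserved in the presence of parameters. Combining the formal jet solution (Lemma \ref{lek}) with the flat correction (Lemma \ref{leklek}) in each plane, and using Proposition \ref{welldefineform} to handle the vanishing of $\alpha$ along the $\Sigma_j$, yields a genuinely smooth global primitive $g\in C^\infty(\CR^{2n})$. Hence every closed polarised $1$-form valued in $L$ is exact, and $H^1(S_P^{\ \bullet}(L))=0$.
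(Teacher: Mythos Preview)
Your approach is essentially the same as the paper's: both proceed by an inductive correction scheme driven by Lemma~\ref{lem:lemman}, the only difference being cosmetic---you phrase the induction as successively killing the components $\alpha(X_1),\dots,\alpha(X_n)$ of the (cohomologous) form, while the paper phrases it as building up the primitive $g_1,g_2=g_1-\tilde g_{12},\dots$; these are dual descriptions of one process. One small imprecision to fix: at the very first step closedness does \emph{not} give $X_l(f_1)=ih_lf_1$ (it only gives the symmetric relation $X_l(f_1)-ih_lf_1=X_1(f_l)-ih_1f_l$), so at $m=1$ you simply solve $X_1(\tilde g_1)-ih_1\tilde g_1=f_1$ with the parametric Lemmata~\ref{lek}--\ref{leklek} and no auxiliary constraint; the hypotheses of Lemma~\ref{lem:lemman} become available only from $m=2$ onward, once the previously treated components of the modified form vanish---exactly as your third paragraph indicates.
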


\begin{proof}
A line bundle valued polarised $1$-form, $\alpha\otimes s$, is closed if and only if
\begin{equation}
X_l(\alpha(X_m))-ih_l\alpha(X_m)=X_m(\alpha(X_l))-ih_m\alpha(X_l) \ ,
\end{equation}for all pairs $l\neq m$, and it is exact if and only if there exists a smooth function $g$ such that
\begin{equation}
X_j(g)-ih_jg=\alpha(X_j) \ ,
\end{equation}for $j=1,\dots,n$.

Let us solve just the first equation using the parametric versions of lemmata \ref{lek} and \ref{leklek}, i.e there exists a function $g_1$ satisfying
\begin{equation}
X_1(g_1)-ih_1g_1=\alpha(X_1) \ .
\end{equation}The closedness of $\alpha\otimes s$ would, then, imply that
\begin{equation}
X_2\circ X_1(g_1)-iX_2(h_1g_1)=X_2(\alpha(X_1))=X_1(\alpha(X_2))-ih_1\alpha(X_2)+ih_2\alpha(X_1) \ ,
\end{equation}and because $[X_1,X_2]=X_1(h_2)=X_2(h_1)=0$ we can write:
\begin{equation}
X_1(X_2(g_1)-ih_2g_1-\alpha(X_2))=ih_1(X_2(g_1)-ih_2g_1-\alpha(X_2)) \ .
\end{equation}

Now we can apply lemma \ref{lem:lemman} to prove that there exists a function $\tilde{g}_{12}$ such that
\begin{equation}
X_1(\tilde{g}_{12})=ih_1\tilde{g}_{12}
\end{equation}and
\begin{equation}
X_2(\tilde{g}_{12})-ih_2\tilde{g}_{12}=f_{12}:=X_2(g_1)-ih_2g_1-\alpha(X_2) \ ,
\end{equation}then
\begin{equation}
X_j(g_1-\tilde{g}_{12})-ih_j(g_1-\tilde{g}_{12})=\alpha(X_j) \ ,
\end{equation}for $j=1,2$.

Now we take $g_2:=g_1-\tilde{g}_{12}$ and $f_{123}:=X_3(g_2)-ih_3g_2-\alpha(X_3)$, and consider again lemma \ref{lem:lemman}: because  $\alpha\otimes s$ is closed, the function $f_{123}$ satisfies
\begin{equation}
\left\{\begin{array}{l}
X_1(f_{123})=ih_1f_{123} \\
\ \ \ \ \ \ \ \ \ \ \ \ \ \ \ \ \ \ \ \ \ \ \ \ \ \ \ \  \\
X_2(f_{123})=ih_2f_{123}
\end{array}\right.
\end{equation}and there exists a function $\tilde{g}_{123}$ such that
\begin{equation}
\left\{\begin{array}{l}
X_1(\tilde{g}_{123})=ih_1\tilde{g}_{123} \\
X_2(\tilde{g}_{123})=ih_2\tilde{g}_{123} \ \ \ \ \ \ \ \ \ \ \ \  \\
X_3(\tilde{g}_{123})-ih_3\tilde{g}_{123}=f_{123}
\end{array}\right.
\end{equation}Thus,
\begin{equation}
X_j(g_2-\tilde{g}_{123})-ih_j(g_2-\tilde{g}_{123})=\alpha(X_j) \ ,
\end{equation}for $j=1,2,3$.

Just repeating this process, one finds a function $g=g_{n-1}-\tilde{g}_{1\cdots n}$ satisfying $\alpha\otimes s=\nabla gs$ and this finishes the proof of this proposition.
\end{proof}

Now propositions \ref{prop:hn} and \ref{prop:h1n}  entail the following result when the dimension of the manifold is $4$:

\begin{teo} There exists a Poincar\'{e} lemma for the Kostant complex in dimension $4$ for a polarisation  given by integrable system in a neighbourhood of an hyperbolic-hyperbolic singularity.
\end{teo}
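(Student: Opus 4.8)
The plan is to prove that in dimension $4$ the Kostant complex for a hyperbolic-hyperbolic singularity, Williamson type $(0,2,0)$, has vanishing cohomology in every positive degree $k\geq 1$. Since the manifold is $4$-dimensional and the polarisation has rank $2$, the only cohomology groups to check are $H^1(S_P^{\ \bullet}(L))$ and $H^2(S_P^{\ \bullet}(L))$. These are precisely the cases $n=2$ of Proposition \ref{prop:h1n} (vanishing of $H^1$) and Proposition \ref{prop:hn} (vanishing of the top group $H^n=H^2$). Therefore the entire content of the theorem is a direct specialisation of the two preceding propositions to $n=2$, and the proof is essentially a bookkeeping argument: there are no intermediate cohomology groups between degree $1$ and the top degree $n=2$ to worry about.

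First I would record the explicit setup: $(M=\CR^4,\omega=\ud x_1\wedge\ud y_1+\ud x_2\wedge\ud y_2)$ with $H=(h_1,h_2)$, $h_j=x_jy_j$, and polarisation $\mathcal{P}=\langle X_1,X_2\rangle$ where $X_j=-x_j\depp{}{x_j}+y_j\depp{}{y_j}$, together with the trivial prequantum line bundle as described in the $(0,n,0)$ section. Then I would simply invoke Proposition \ref{prop:h1n} with $n=2$ to conclude $H^1(S_P^{\ \bullet}(L))=0$, and Proposition \ref{prop:hn} with $n=2$ to conclude $H^2(S_P^{\ \bullet}(L))=0$. The combination of these two vanishing statements, together with the observation that degrees $0,1,2$ exhaust the complex in dimension $4$, yields the Poincar\'{e} lemma: the Kostant complex is locally exact in all positive degrees at a hyperbolic-hyperbolic singularity.

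The one conceptual point worth flagging is that this theorem is the first genuinely multi-component case, so it is the natural place to confirm that the parametric machinery actually closes up. The real work was already done in establishing Lemma \ref{lem:lemman} and the parametric versions of Lemmata \ref{lek} and \ref{leklek}, which is where the compatibility (closedness) conditions between the equations $X_m(\tilde g)-ih_m\tilde g=f$ and $X_l(\tilde g)=ih_l\tilde g$ are shown to be both necessary and sufficient. Thus the anticipated obstacle is not present at the level of this theorem: the hard analytic estimates (smoothness of the integral solution at $h=0$, Borel's theorem for the formal part) live in the two-dimensional lemmata, and the inductive unwinding over the index $j=1,\dots,n$ in Proposition \ref{prop:h1n} is what guarantees that solving one coordinate block at a time does not destroy the flatness conditions in the remaining blocks. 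For $n=2$ this induction is just two steps, so no new difficulty arises and the statement follows immediately.

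I would therefore write the proof as a short deduction rather than a fresh argument: state that dimension $4$ forces the only nontrivial cohomology groups to be $H^1$ and $H^2$, cite Propositions \ref{prop:h1n} and \ref{prop:hn} for their vanishing, and conclude. If one wished to make the proof self-contained one could reproduce the $n=2$ specialisation explicitly, writing out the single compatibility equation
\begin{equation}
X_1(\alpha(X_2))-ih_1\alpha(X_2)=X_2(\alpha(X_1))-ih_2\alpha(X_1)
\end{equation}
as the closedness condition for a $1$-form $\alpha\otimes s$, solving $X_1(g_1)-ih_1g_1=\alpha(X_1)$ by the parametric Lemmata \ref{lek} and \ref{leklek}, and then correcting by a single $\tilde g_{12}$ via Lemma \ref{lem:lemman} to obtain a global primitive $g$; but this merely transcribes the general inductive proof at its shortest length, so the cleaner presentation is the two-line deduction from the already-proved propositions.
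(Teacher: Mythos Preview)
Your proposal is correct and matches the paper's approach exactly: the theorem is stated in the paper as an immediate consequence of Propositions \ref{prop:hn} and \ref{prop:h1n} specialised to $n=2$, with no further argument given. Your observation that in dimension $4$ the only positive degrees are $1$ and $2=n$, so those two propositions suffice, is precisely the intended deduction.
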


\begin{rema}
Observe that together with the results of \cite{Solha}, this theorem asserts that the Kostant complex computes geometric quantisation when the polarisation is given by nondegenerate integrable systems in dimension $4$.
\end{rema}

For dimensions higher than $4$ (generic dimension of the leaves higher than $2$) we still need to compute the cohomology groups in degree $k\neq 1,n$.

Let us start considering the three dimensional case as an example to show the procedure that needs to be done in higher dimensions,

\begin{prop}\label{prop:h23} The second cohomology group
$H^2(S^{\ \bullet}_P(L))$ vanishes for purely hyperbolic singularities in dimension $6$.
\end{prop}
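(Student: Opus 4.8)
The plan is to prove that $H^2(S^{\ \bullet}_P(L))$ vanishes in the purely hyperbolic case in dimension $6$ by imitating the strategy of Proposition \ref{prop:h1n}, only now working one degree up. A line bundle valued polarised $2$-form can be written as $\alpha\otimes s$ where, because $n=3$, the form $\alpha\in\Omega_P^2(\CR^6)$ is determined by its three components $\alpha(X_1,X_2)$, $\alpha(X_1,X_3)$ and $\alpha(X_2,X_3)$. First I would write out explicitly the closedness condition $\ud^\nabla(\alpha\otimes s)=0$ in terms of these components: since $\nabla s=-i\Theta\otimes s$, closedness is equivalent to a system of equations of the form $X_l(\alpha(X_m,X_p))-ih_l\alpha(X_m,X_p)$ being totally antisymmetric under permutation of the indices $l,m,p$ (the bracket terms all vanish because $[X_i,X_j]=0$ for a Williamson basis of commuting hyperbolic fields). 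Exactness means finding $\beta\in\Omega_P^1(\CR^6)$, with components $\beta(X_1),\beta(X_2),\beta(X_3)$, solving
\begin{equation}
\alpha(X_m,X_p)=X_m(\beta(X_p))-ih_m\beta(X_p)-X_p(\beta(X_m))+ih_p\beta(X_m)
\end{equation}
for each pair $m<p$.

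The key move is to reduce each scalar equation to the two-dimensional model by treating the variables other than $x_m,y_m$ as parameters, exactly as in Lemma \ref{lem:lemman}. Concretely, I would first solve $X_1(\beta(X_2))-ih_1\beta(X_2)=\alpha(X_1,X_2)$ for $\beta(X_2)$ and $X_1(\beta(X_3))-ih_1\beta(X_3)=\alpha(X_1,X_3)$ for $\beta(X_3)$, using the parametric versions of Lemmata \ref{lek} and \ref{leklek}, while setting $\beta(X_1)=0$. This kills the two components of $\alpha$ containing the index $1$. It then remains to arrange the component $\alpha(X_2,X_3)$. The closedness relation involving the triple $(1,2,3)$ should guarantee that the residual quantity
\begin{equation}
f:=\alpha(X_2,X_3)-X_2(\beta(X_3))+ih_2\beta(X_3)+X_3(\beta(X_2))-ih_2\beta(X_2)
\end{equation}
satisfies the compatibility condition $X_1(f)=ih_1f$. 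Invoking Lemma \ref{lem:lemman} with $m=2$ (or $m=3$), I would solve for a correction that adjusts $\beta(X_3)$ so as to produce the missing component while preserving the already-solved equations in the $X_1$-direction, since Lemma \ref{lem:lemman} guarantees exactly that the solution inherits the $X_l(\tilde g)=ih_l\tilde g$ behaviour for indices $l$ not being solved for.

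The main obstacle, and the step demanding real care, is the bookkeeping of compatibility: I must verify that after solving the first batch of equations the leftover term $f$ genuinely satisfies the hypothesis $X_1(f)=ih_1f$ of Lemma \ref{lem:lemman}, and that the final correction does not reintroduce nonzero values in the components $\alpha(X_1,X_2)$ and $\alpha(X_1,X_3)$ that were already eliminated. This is where the full strength of the closedness of $\alpha\otimes s$ is consumed, through the Jacobi-type identities among the three closedness equations, and where the commutativity $[X_i,X_j]=0$ together with $X_i(h_j)=0$ is essential for the cross-terms to cancel. Everything else is a parametric repetition of the two-dimensional analysis already established, so once the compatibility chain is checked the proposition follows by the same iterative elimination used in Proposition \ref{prop:h1n}.
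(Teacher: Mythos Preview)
Your proposal is correct and follows essentially the same route as the paper's proof: set $\beta(X_1)=0$, use the parametric versions of Lemmata \ref{lek} and \ref{leklek} to obtain provisional values for $\beta(X_2)$ and $\beta(X_3)$, verify via closedness and the relations $[X_i,X_j]=X_i(h_j)=0$ that the residual in the $(2,3)$-equation satisfies the hypothesis of Lemma \ref{lem:lemman}, and then correct $\beta(X_3)$ accordingly. The only differences are organisational (the paper solves for $\beta(X_2)$ first and then treats $\beta(X_3)$ as an unknown subject to two coupled equations, whereas you solve both $X_1$-equations first and correct afterwards), and note a small typo in your displayed residual $f$: the last term should read $-ih_3\beta(X_2)$, not $-ih_2\beta(X_2)$.
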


\begin{proof}
A line bundle valued polarised $2$-form, $\alpha\otimes s$, is closed if and only if
\begin{equation}
0=\Somatorio{j=1}{3}(-1)^{j+1}\left(X_j(\alpha(X_1,\dots,\hat{X_j},\dots,X_3))-ih_j\alpha(X_1,\dots,\hat{X_j},\dots,X_3)\right) \ ,
\end{equation}and it is exact if and only if there exists a $\beta\in\Omega_P^1(M)$ such that,
\begin{equation}
\alpha(X_l,X_m)=X_l(\beta(X_m))-ih_l\beta(X_m)-X_m(\beta(X_l))+ih_m\beta(X_l) \ ,
\end{equation}for all pairs $l\neq m$.

Taking $\beta(X_1)=0$ one can solve the $\alpha(X_1,X_2)$ equation for $\beta(X_2)$ using parametric versions of lemmata \ref{lek} and \ref{leklek}, then, plug this solution in the $\alpha(X_2,X_3)$ equation. This reduces the previouls system of equations to:
\begin{equation}
\left\{\begin{array}{l}
f_{13}=X_1(g_3)-ih_1(g_3) \\
\ \ \ \ \ \ \ \ \ \ \ \ \ \ \  \ \ \ \ \ \ \ \ \ \ \ \ \ \ \ \ \ , \\
f_{23}=X_2(g_3)-ih_2(g_3)
\end{array}\right.
\end{equation}where  the function $f_{13}=\alpha(X_1,X_3)$ and $f_{23}=\alpha(X_2,X_3)+X_3(\beta(X_2))-ih_3\beta(X_2)$ are data functions and $g_3=\beta(X_3)$ is unknown.

Using, again, parametric versions of lemmata \ref{lek} and \ref{leklek}, one can find a function $g_{13}$ satisfying $f_{13}=X_1(g_{13})-ih_1(g_{13})$.
\begin{equation}
X_1(f_{23})=X_1(\alpha(X_2,X_3))+X_1\circ X_3(\beta(X_2))-iX_1(h_3\beta(X_2)) \ ,
\end{equation}using $[X_1,X_3]=X_1(h_3)=X_3(h_1)=0$, one has
\begin{equation}
X_1(f_{23})=X_1(\alpha(X_2,X_3))+X_3(X_1(\beta(X_2)))-ih_3X_1(\beta(X_2)) \ .
\end{equation}The function $\beta(X_2)$ is solution of
\begin{equation}
\alpha(X_1,X_2)=X_1(\beta(X_2))-ih_1\beta(X_2) \ ,
\end{equation}and the fact that $\alpha\otimes s$ is closed gives
\begin{align}
&X_1(\alpha(X_2,X_3))-ih_1\alpha(X_2,X_3)\nonumber\\=&X_2(\alpha(X_1,X_3))-ih_2\alpha(X_1,X_3)-X_3(\alpha(X_1,X_2))+ih_3\alpha(X_1,X_2) \ ,
\end{align}
\begin{equation}
X_1(f_{23})-ih_1f_{23}=X_2(f_{13})-ih_2f_{13} \ ,
\end{equation}applying $X_2$ to $f_{13}=X_1(g_{13})-ih_1(g_{13})$ one has
\begin{equation}
X_1(X_2(g_{13})-ih_2g_{13}-f_{13})=ih_1(X_2(g_{13})-ih_2g_{13}-f_{13}) \ ,
\end{equation}because $[X_1,X_2]=X_1(h_2)=X_2(h_1)=0$.

Now we can apply lemma \ref{lem:lemman} to prove that there exists a function $\tilde{g}_{13}$ such that
\begin{equation}
X_1(\tilde{g}_{13})=ih_1\tilde{g}_{13}
\end{equation}and
\begin{equation}
X_2(\tilde{g}_{13})-ih_2\tilde{g}_{13}=X_2(g_{13})-ih_2g_{13}-f_{13} \ ,
\end{equation}then
\begin{equation}
X_j(g_{13}-\tilde{g}_{13})-ih_j(g_{13}-\tilde{g}_{13})=f_{j3} \ ,
\end{equation}for $j=1,2$. And this ends the proof of the proposition.
\end{proof}

%
%
%

In a similar way we can prove the more general statement,

\begin{prop}\label{prop:hkn} The $k$th cohomology group
$H^k(S^{\ \bullet}_P(L))$  with $k>0$ vanishes for singularities of Williamson type $(0,n,0)$.
\end{prop}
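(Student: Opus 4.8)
The plan is to prove Proposition~\ref{prop:hkn} by induction on the total dimension $n$ together with a descending argument on the cohomological degree $k$, reducing everything to the two building blocks already established: the parametric versions of Lemmata~\ref{lek} and~\ref{leklek}, and the auxiliary Lemma~\ref{lem:lemman} which guarantees that the \lq\lq flat in the other directions\rq\rq\ property is preserved by the solution operator. The cases $k=1$ (Proposition~\ref{prop:h1n}) and $k=n$ (Proposition~\ref{prop:hn}) are already done, so the genuine content is the intermediate range $1<k<n$, for which the proof of Proposition~\ref{prop:h23} in dimension $6$ is the model to be upgraded.

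First I would set up the cochain-level notation. Given a closed $\alpha\otimes s\in S_P^k(L)$, I want to produce $\beta\in\Omega_P^{k-1}(M)$ with $\ud^\nabla(\beta\otimes s)=\alpha\otimes s$. Writing everything out on the Williamson basis $X_1,\dots,X_n$, closedness reads
\begin{equation}
\Somatorio{j=0}{k}(-1)^{j}\left(X_{i_j}(\alpha(X_{i_0},\dots,\hat{X_{i_j}},\dots,X_{i_k}))-ih_{i_j}\alpha(X_{i_0},\dots,\hat{X_{i_j}},\dots,X_{i_k})\right)=0
\end{equation}
for every ordered $(k{+}1)$-tuple, and exactness is the requirement that each component $\alpha(X_{i_1},\dots,X_{i_k})$ be realised as the corresponding $\ud^\nabla$-coboundary of the unknown components $\beta(X_{i_1},\dots,X_{i_{k-1}})$. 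The normalisation step, exactly as in the proofs of Propositions~\ref{prop:hn} and~\ref{prop:h23}, is to impose $\imath_{X_1}\beta=0$, i.e.\ all components of $\beta$ containing the index $1$ are set to zero. This kills one variable of freedom and lets me solve the equations indexed by tuples containing $1$ first, treating $x_1,y_1$ as the active pair and all other coordinates as parameters.

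The core of the argument is then an inner induction that mimics the telescoping cascade $g_1,g_2,\dots$ of Proposition~\ref{prop:h1n} and the $g_{13},\tilde g_{13}$ manipulation of Proposition~\ref{prop:h23}. After solving all equations involving index $1$ via the parametric Lemmata~\ref{lek} and~\ref{leklek}, I substitute these solutions into the remaining equations; the hypothesis that $\alpha\otimes s$ is closed, combined with the commutation relations $[X_l,X_m]=X_l(h_m)=X_m(h_l)=0$ for all $l\neq m$, forces the resulting data functions to satisfy the compatibility conditions $X_l(f)=ih_lf$ in precisely the directions already solved. This is the hypothesis of Lemma~\ref{lem:lemman}, which then supplies corrections $\tilde g$ that are flat (in the sense $X_l(\tilde g)=ih_l\tilde g$) in the old directions while solving the new direction, so that subtracting them does not spoil the equations already satisfied. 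Iterating over the indices $2,3,\dots$ produces the full $\beta$; what has effectively happened is a reduction of the $k$-dimensional problem in $\CR^{2n}$ to a $(k{-}1)$-dimensional problem in the remaining $2(n{-}1)$ coordinates, so the outer induction on $n$ closes the loop.

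The main obstacle will be the bookkeeping that makes the cascade consistent: one must verify that at each stage the compatibility conditions needed to invoke Lemma~\ref{lem:lemman} are genuinely consequences of the closedness identity and the vanishing brackets, and that each correction leaves intact \emph{all} previously solved equations, not merely the most recent one. This is where the commutativity $[X_l,X_m]=0$ and the fact that $h_l$ is a first integral of $X_m$ ($X_m(h_l)=0$) do the essential work, allowing the operators $X_l-ih_l\cdot$ to be interchanged past one another. The individual analytic facts---smoothness, well-definedness across the singular locus $h=0$, and flatness preservation---are already packaged in Lemmata~\ref{lek}, \ref{leklek} and~\ref{lem:lemman} and their parametric extensions noted in the remark, so once the combinatorial induction is organised correctly the proof is a matter of carefully indexed repetition rather than new analysis.
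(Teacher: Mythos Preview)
Your proposal is correct and follows essentially the same approach as the paper, which in fact provides no detailed proof of Proposition~\ref{prop:hkn} beyond the phrase ``In a similar way we can prove the more general statement'', leaving the reader to extrapolate from Propositions~\ref{prop:hn}, \ref{prop:h1n} and~\ref{prop:h23}. Your outline---normalise $\imath_{X_1}\beta=0$, solve the equations containing index~$1$ via the parametric Lemmata~\ref{lek} and~\ref{leklek}, then cascade through the remaining indices using Lemma~\ref{lem:lemman} and the commutation relations $[X_l,X_m]=X_l(h_m)=0$---is precisely the intended extrapolation, and your emphasis on the bookkeeping (that each correction preserves \emph{all} previously solved equations) identifies the one point where a careless write-up could go wrong.
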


\subsection{The general case}

We can apply the same strategy as in the section above when there are other singularities.

For purely nonhyperbolic singularities, making use of the existence of a torus action, we can apply propositions 7.4 and 10.3 of \cite{Solha} to deduce Poincar\'{e} lemma from dimension 2 (or 4 if there are focus-focus singularities) to other dimensions. In those cases, we can even give a closed-form expression combining the explicit homotopy operators in \cite{MiSolha} for different circle actions which pairwise commute.\footnote{In \cite{Solha} the focus-focus in dimension bigger than 4 is not considered, so there is no explicit proof for the higher dimensional case containing focus-focus singularities. However, the proof follows the same argument as in the proof of proposition 10.3 in \cite{Solha}.}

For general singularities, then, one can combine the results of \cite{Solha} cited above with the results of subsection 7.1 to obtain the following theorem,

\begin{teo}The cohomology groups of the Kostant complex associated to a polarisation defined by an integrable system
in a neighbourhood of a singular nondegenerate point vanish in all degrees greater than $0$.
\end{teo}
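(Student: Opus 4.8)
The plan is to deduce the general theorem by combining the purely hyperbolic case, which is Proposition~\ref{prop:hkn}, with the nonhyperbolic (toric and semitoric) results imported from \cite{Solha}, using the product structure of the symplectic local model. By Theorem~\ref{thm:rank0uni}, near a nondegenerate singular point of Williamson type $(k_e,k_h,k_f)$ the foliation is symplectically equivalent to its linear model, which splits as a product
\begin{equation}
(\CR^{2n},\omega)\cong(M_e,\omega_e)\times(M_h,\omega_h)\times(M_f,\omega_f) \ ,
\end{equation}
where $M_e$ carries the $k_e$ elliptic blocks, $M_h$ the $k_h$ hyperbolic blocks, and $M_f$ the $k_f$ focus-focus pairs. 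On each factor the polarisation, the prequantum bundle, and the connection all split as external tensor products, so the Kostant complex of the total space is the (completed) tensor product of the Kostant complexes of the factors. The strategy, then, is to establish a K\"{u}nneth-type vanishing: if each factor admits a Poincar\'{e} lemma (all positive-degree cohomology groups vanish), then so does the product.

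First I would assemble the vanishing on each factor. For $M_h$ this is exactly Proposition~\ref{prop:hkn}, which gives $H^k(S_P^{\ \bullet}(L))=0$ for all $k>0$ in the pure hyperbolic case. For the purely elliptic and focus-focus factors, the cited propositions 7.4 and 10.3 of \cite{Solha} provide a Poincar\'{e} lemma via the symplectic circle actions associated to those singularities; combined with the explicit homotopy operators of \cite{MiSolha} for the commuting circle actions, this yields vanishing in all positive degrees for $M_e$ and $M_f$ as well. Having each factor resolved, I would then run the product argument in the concrete, computational style already used in Section~7: rather than invoking an abstract K\"{u}nneth spectral sequence, the proof proceeds by treating the coordinates of all but one block as parameters, solving the cohomological equation one block at a time via the parametric versions of Lemmata~\ref{lek} and \ref{leklek} (for the hyperbolic directions) and the corresponding homotopy operators from \cite{Solha} (for the elliptic and focus-focus directions). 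Lemma~\ref{lem:lemman} is the key device here: it guarantees that solving the equation in one block preserves the flatness conditions $X_l(\tilde g)=ih_l\tilde g$ in the remaining blocks, so the partial solutions can be subtracted off successively, exactly as in the inductive bookkeeping of Proposition~\ref{prop:h1n} and Proposition~\ref{prop:h23}.

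Concretely, given a closed polarised $k$-form $\alpha\otimes s$, I would choose a primitive candidate $\beta$ annihilating one distinguished vector field, solve the resulting scalar equation in that block's two coordinates, substitute back to reduce the closedness system to one in fewer vector fields, and iterate. At each stage the closedness of $\alpha\otimes s$ together with the commutation relations $[X_i,X_j]=0$ and $X_i(h_j)=0$ forces the newly produced data functions to satisfy precisely the hypotheses needed to apply Lemma~\ref{lem:lemman} (or its elliptic/focus-focus analogue), so the induction closes. Smoothness across the singular set $\{h_j=0\}$ is controlled throughout by the parametric form of Lemma~\ref{leklek}, whose proof, as the Remark after it notes, is insensitive to extra parameters.

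The main obstacle I anticipate is the uniform smoothness and well-definedness of the iterated solution across the singular locus when hyperbolic, elliptic, and focus-focus blocks are genuinely mixed. Each class of singularity is handled by a different analytic mechanism: the hyperbolic blocks rely on the Taylor-flat analysis and the logarithmic-time integral formula of Lemma~\ref{leklek}, whereas the elliptic and focus-focus blocks rely on averaging over circle actions. Verifying that these mechanisms compose cleanly---that a solution obtained by integration in a hyperbolic block remains smooth and still satisfies the requisite flatness relations after one subsequently averages in an elliptic block, and vice versa, with no loss of regularity along the union of the vanishing sets $\Sigma_i$---is the delicate point. The parametric remarks attached to Lemmata~\ref{lek} and \ref{leklek}, Proposition~\ref{welldefineform} controlling the behaviour of polarised forms on the $\Sigma_i$, and the commuting structure of the circle actions are precisely the tools designed to overcome this, so the argument should go through by the same successive-elimination scheme, carried out block by block in an order that exhausts all $n$ vector fields.
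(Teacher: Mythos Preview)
Your proposal is correct and follows essentially the same approach as the paper: combine Proposition~\ref{prop:hkn} for the hyperbolic blocks with propositions~7.4 and~10.3 of \cite{Solha} for the elliptic and focus-focus blocks, using the parametric solution scheme (Lemmata~\ref{lek}, \ref{leklek}, and~\ref{lem:lemman}) block by block rather than an abstract K\"{u}nneth argument. The paper's own treatment of this theorem is in fact only the short sketch immediately preceding its statement, so your elaboration of the successive-elimination procedure and your identification of the smoothness-compatibility issue across mixed blocks are more detailed than what the paper itself provides.
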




\end{document}